\pgfplotsset{compat=1.16}
\tikzset{
    cross/.pic = {
    \draw[rotate = 45] (-#1,0) -- (#1,0);
    \draw[rotate = 45] (0,-#1) -- (0, #1);
    }
}
\pgfplotsset{compat=newest}
\pgfplotsset{compat=newest}
	\definecolor{ao(english)}{rgb}{0.0, 0.5, 0.0}
\definecolor{gray}{gray}{0.4}
\theoremstyle{plain}
\newtheorem{theorem}{Theorem}[section]
\newtheorem{definition}[theorem]{Definition}
\newtheorem{lemma}[theorem]{Lemma}
\newtheorem*{problem*}{Problem}
\newtheorem{proposition}[theorem]{Proposition}
\newtheorem*{theorem*}{Theorem}
\theoremstyle{definition}
\newtheorem{remark}[theorem]{Remark}
\def\Item$#1${\item $\displaystyle#1$
   \hfill\refstepcounter{equation}(\theequation)}
\theoremstyle{plain} 
\theoremstyle{definition}
\theoremstyle{remark} 
\numberwithin{equation}{section}
\date{}
\newcommand{\rr}{\mathbb{R}}
\newcommand{\hh}{\mathbb{H}}
\newcommand{\sss}{\mathbb{S}}
\newcommand{\e}{\epsilon}
\newcommand{\inn}{\textnormal{in}\ }
\newcommand{\onn}{\textnormal{on}\ }
\newcommand{\andd}{\textnormal{and}}
\newcommand{\dd}{\textnormal{d}}
\newcommand{\norm}[2]{\left|\left| #1 \right| \right|_{#2}}
\newcommand{\scal}[1]{ \left( #1 \right)_{\mathcal{L}^2} }
\newcommand{\pder}[2]{\frac{\partial #1}{\partial #2}}
\numberwithin{equation}{section}
\tikzstyle{mybox} = [draw=black, very thick, rectangle, rounded corners, inner ysep=5pt, inner xsep=5pt]
\title[Serrin domains of $\sss^3$]{Non-isoparametric Serrin domains of $\sss^3$ with connected toric boundary}
\author{Andrea Bisterzo and Shigeru Sakaguchi}
\address{Andrea Bisterzo
  \newline \indent Centro di Ricerca Matematica Ennio De Giorgi
  \newline \indent
Scuola Normale Superiore
\newline\indent Piazza dei Cavalieri 3, 56126, Pisa, Italy.}
\email{andrea.bisterzo@sns.it}
\address{Shigeru Sakaguchi
  \newline \indent Admissions Center
  \newline \indent
Tohoku University
\newline\indent Sendai, 980-8576, Japan.}
\email{sigersak@tohoku.ac.jp}
\begin{document}


\begin{abstract}
We investigate the overdetermined  torsion problem
\begin{align}
\begin{cases}
-\Delta u = 1 & \inn \Omega\\ u=0 & \onn \partial \Omega\\ \pder{u}{\nu}=\textnormal{const.} & \onn \partial \Omega,
\end{cases}
\end{align}
where $\Omega$ is a smooth Riemannian domain. Domains admitting a solution to this problem are called \textit{Serrin domains}, after the celebrated work of Serrin \cite{Se71}, where is proved that in $\rr^n$ such domains are geodesic balls.
In the present paper we establish the existence of two distinct types of Serrin domains of $\sss^3$, respectively of small and large volume, 
each of whose boundary is connected and is neither isometric to a geodesic sphere nor to a Clifford torus. These domains arise as nontrivial perturbations of some classical symmetric solutions to the same problem. 
Our approach relies on an implicit construction based on the Crandall-Rabinowitz bifurcation theorem, which allows us to detect branches of non-radial solutions bifurcating from a family of radial ones. 
The resulting examples highlight new geometric configurations of the torsion problem in the three-dimensional sphere, providing another proof of the fact that the rigidity of Serrin-type results can fail in the presence of curvature.
\end{abstract}

\maketitle
\tableofcontents


\section{Introduction}
Let $(M,g)$ be a smooth Riemannian manifold and $\Omega\subset M$ a bounded, smooth domain with outer unit normal field $\nu$. Denote by $\Delta$ the negative, in the spectral sense, Laplace-Beltrami operator of $M$ (i.e., in $\rr$ we have $\Delta=\pder{^2}{x^2}$). Assuming the existence of a solution to the following boundary value problem
\begin{align}\label{Eq:Torsion}
\begin{cases}
-\Delta u = 1 & \inn \Omega\\ u=0 & \onn \partial \Omega\\ \pder{u}{\nu}= \textnormal{const.} & \onn \partial \Omega,
\end{cases}
\end{align}
in the seminal paper \cite{Se71}, J. Serrin proved that if $M=\rr^n$ then the domain $\Omega$ is forced to be a geodesic ball, hence providing one of the most fascinating and celebrated rigidity results in geometric analysis.
\medskip

\begin{definition}
A domain $\Omega \subset M$ supporting a solution to \eqref{Eq:Torsion} is said to be a \textnormal{Serrin domain} (or \textnormal{harmonic domain}). 
\end{definition}

\begin{remark}In \cite{FM15} the authors proved that (possibly small) Serrin domains exist in any compact Riemannian manifold: they are constructed by perturbing sufficiently small geodesic balls.
\end{remark}
\medskip

Serrin's result, proved originally by using the Alexandrov's moving planes method (for an alternative proof see also the well known work by H.F. Weinberger \cite{We71}), has been widely studied in subsequent years. 
Remarkable extensions to other ambient manifolds have been obtained by R. Molzon in \cite{Molzon91} and by S. Kumaresan and J. Prajapat in \cite{KP98}. 
Here the authors, with the same technique, provided a rigidity results for Serrin domains contained in the hyperbolic space $\hh^n$ or in the hemisphere $\sss^n_+$. Other significant extension (and generalizations) to different ambient manifolds and/or to other differential equations have been obtained. A comprehensive account is beyond the scope of this paper: we only refer the reader to \cite{ABBM25, ABM24, AFS25, BNST08, CV19, FM15, FR22, FRS24, Ro18} and the references cited therein.

Among the above mentioned works, the rigidity theorems in $\sss^n_+$ due to \cite{Molzon91, KP98} opened several questions. 
The fact that the result obtained only holds for domains contained in a half-sphere seemed purely technical at the beginning, giving the perception that it could only be a pathology of the method the authors used. This opened the path to new and extremely deep problems. A first question is
\medskip
\begin{center}
\begin{enumerate}
\item[1.] \textit{Is any Serrin domain of $\sss^n$ a geodesic ball?}
\end{enumerate}
\end{center}
\medskip
The answer is clearly false. Indeed, one can easily construct infinitely many Serrin domains with non-connected boundary just considering any symmetric tubular neighbourhood of the equator $\sss^{n-1}\hookrightarrow \sss^n$. We stress that, in this case, the boundary components of the domains are geodesic spheres. This naturally led to the following question
\medskip
\begin{center}
\begin{enumerate}
\item[2.] \textit{Is any Serrin domain of $\sss^n$ bounded by geodesic spheres?}
\end{enumerate}
\end{center}
\medskip
If one does not impose constrictions on the topology of the boundary of the domain, then the answer to this second question is, again, trivial: no. Indeed, in the particular case $n=3$, any symmetric tubular neighbourhood of the Clifford torus $\sss^1\left( \frac{1}{\sqrt{2}}\right)\times \sss^1\left( \frac{1}{\sqrt{2}}\right) \hookrightarrow \sss^3$ is a Serrin domain. For higher dimensions, similar examples can be constructed using Cartan-Munzner polynomials (see \cite{BP23, Sh00}). As a consequence, one can justifiably ask
\medskip
\begin{center}
\begin{enumerate}
\item[3.] \textit{Is any Serrin domain of $\sss^n$ with simply connected boundary components bounded by geodesic spheres?}
\end{enumerate}
\end{center}
\medskip
A (negative) answer to this second question has been provided by M. M. Fall, I. A. Minlend and T. Weth in the celebrated paper \cite{FMW}. Here the authors constructed a non-symmetric (i.e. not bounded by geodesic spheres) tubular neighbourhood of the equator supporting a solution to \eqref{Eq:Torsion}. The proof is very technical and relies on a suitable version of the Crandall-Rabinowitz bifurcation theorem, \cite{CR71}. The paper \cite{FMW} represents a milestone in the understanding of the validity of Serrin-like theorems in more general manifolds. For similar results, we also refer to \cite{DVEPS19, DVEPS23}. Starting from this, we reached the following problem
\medskip
\begin{center}
\begin{enumerate}
\item[4.] \textit{Is any Serrin domain of $\sss^n$ with connected boundary a geodesic ball?}
\end{enumerate}
\end{center}
\medskip
Again, the answer is negative, at least in $\sss^3$. Indeed, let $\mathcal{F}=\{\Sigma_t\}_{t\in \left[0,\frac{\pi}{2}\right]}$ be a singular Riemannian foliation of $\sss^3$ of Clifford tori
\begin{align}
\Sigma_t=\sss^1\left(\sin(t)\right)\times \sss^1\left(\cos(t)\right)\hookrightarrow \sss^3.
\end{align}
Fixed any $T\in \left(0,\frac{\pi}{4}\right)$, the domain $\Omega_T= \sqcup_{0\leq t< T} \Sigma_t$, that is, the geodesic ball of radius $T$ around $\Sigma_0$, is an isoparametric domain (i.e. foliated by parallel CMC surfaces) with homogeneous leaves. By a stability argument (see \cite[Theorem 5.1]{BP23} or \cite{Bi24}) one can prove that any solution to the torsion problem
\begin{align}
\begin{cases}
-\Delta u=1 & \inn \Omega_T\\
u=0 & \onn \partial \Omega_T
\end{cases}
\end{align}
is, in fact, radial, i.e. is constant on each $\Sigma_t$. This, in turn, implies that $\pder{u}{\nu}$ is constant on $\partial \Omega_T$, showing that $\Omega_T$ is a Serrin domain with connected boundary and not isometric to a geodesic ball. We stress that $\Omega_T$ is not contained in any hemisphere, even for small $T$. Hence, the previous claim is not in contradiction with \cite{Molzon91, KP98}. The existence of these other kinds of Serrin domains carried us to ask ourselves
\medskip
\begin{center}
\begin{enumerate}
\item[4'.] \textit{Is any Serrin domain of $\sss^3$ with connected boundary either a geodesic ball or an isoparametric domain with toric boundary?}
\end{enumerate}
\end{center}
\medskip
The aim of the present paper is to answer, again in negative, to the above point. Our main theorem is the following:


\begin{theorem}\label{Thm:Main}
There exist two families of Serrin domains, each obtained by perturbing a certain $\Omega_T$, whose volumes are respectively close to $0$ and to $|\sss^3|$ and that satisfy the following properties
\begin{enumerate}
\item their boundaries are connected;
\item their boundaries are neither isometric to a geodesic sphere nor to a Clifford torus.
\end{enumerate}
\end{theorem}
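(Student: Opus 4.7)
The natural strategy is to apply the Crandall--Rabinowitz bifurcation theorem, as done in \cite{FMW} for tubular neighbourhoods of equatorial spheres, using the radius $T$ of $\Omega_T$ as bifurcation parameter. For $\varphi$ in the space $C^{2,\alpha}_0(\Sigma_T)$ of zero-mean functions of small norm, I would parametrize perturbed domains $\Omega_\varphi$ with boundary $\{\exp_p(\varphi(p)\nu(p)) : p \in \Sigma_T\}$, let $u_\varphi$ solve $-\Delta u_\varphi=1$ in $\Omega_\varphi$ with zero Dirichlet data, and define the Serrin map $F(\varphi,T):=(\partial_\nu u_\varphi)\circ \Psi_\varphi+|\Omega_\varphi|/|\partial\Omega_\varphi|$, which lands in zero-mean $C^{1,\alpha}$-functions on $\Sigma_T$ (using $\int_{\partial\Omega_\varphi}\partial_\nu u_\varphi\,d\sigma=-|\Omega_\varphi|$ from the divergence theorem). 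Restricting to zero-mean $\varphi$ removes the trivial kernel direction corresponding to shifting $T$. The radial construction described before the theorem yields $F(0,T)=0$ for every $T$, so $(0,T)$ forms the trivial branch from which we aim to bifurcate, and any nontrivial zero $(\varphi,T)$ of $F$ gives a non-radial Serrin domain close to $\Omega_T$.

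\textbf{Spectral analysis of the linearization.} The Hadamard shape-derivative formula identifies $L_T:=D_\varphi F(0,T)$ with a first-order Dirichlet-to-Neumann-type operator on $\Sigma_T$: for $\psi\in C^{2,\alpha}_0(\Sigma_T)$, one has $L_T\psi=\partial_\nu w_\psi+\psi\,(\partial_\nu^2 u_0+H\,\partial_\nu u_0)$, where $w_\psi$ is the harmonic extension of $-\psi\,\partial_\nu u_0$ into $\Omega_T$ and $H$ is the mean curvature of $\Sigma_T$. Because $\Omega_T$ and $\Sigma_T$ are invariant under the Hopf $\sss^1\times\sss^1$-action, $L_T$ is diagonal in the Fourier basis $\{e^{ik\theta_1+il\theta_2}\}_{(k,l)\neq(0,0)}$ on $\Sigma_T$, acting as multiplication by a scalar $\lambda_{k,l}(T)$ obtained from a one-dimensional Sturm--Liouville problem on $[0,T]$ with coefficients involving $\sin t$ and $\cos t$.

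\textbf{Bifurcation and the two families.} The decisive analytic step is to show that, as $T\to 0^+$ (the regime where $\Sigma_T$ collapses toward the focal circle $\Sigma_0$), infinitely many eigenvalues $\lambda_{k,l}(T)$ change sign, yielding a sequence $T_j\to 0^+$ and modes $(k_j,l_j)\neq(0,0)$ with $\lambda_{k_j,l_j}(T_j)=0$, one-dimensional kernel within a suitable symmetry-adapted subspace, and transverse crossing $\partial_T\lambda_{k_j,l_j}(T_j)\neq 0$. The Crandall--Rabinowitz theorem \cite{CR71} then produces smooth branches of nontrivial zeros of $F$ bifurcating from $(0,T_j)$. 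The resulting domains $\Omega_{\varphi(s)}$ have connected boundaries (small $C^{2,\alpha}$-perturbations of the connected torus $\Sigma_{T_j}$), which are not geodesic spheres (topologically tori), and, since the bifurcating direction is a non-constant Fourier mode and $T_j$ is bounded away from $\pi/4$, they are not isometric to Clifford tori. Since $|\Omega_{T_j}|=\pi^2(1-\cos 2T_j)\to 0$, this produces the small-volume family. The large-volume family is obtained by applying the isometric involution $(z_1,z_2)\mapsto(z_2,z_1)$ of $\sss^3\subset\mathbb{C}^2$, which sends $\Sigma_t$ to $\Sigma_{\pi/2-t}$ and preserves the Serrin condition; it transports each small-volume example to a Serrin domain close to $\Omega_{\pi/2-T_j}$, whose volume tends to $|\sss^3|$.

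\textbf{Main obstacle.} The core difficulty is the spectral step: integrating the radial ODE for each Fourier mode explicitly enough to prove that the solvability conditions $\lambda_{k,l}(T)=0$ admit infinitely many solutions accumulating at $T=0$, and simultaneously verifying the simplicity of $\ker L_{T_j}$ (after factoring out the Hopf symmetries) together with the Crandall--Rabinowitz transversality. Technically this amounts to a careful asymptotic analysis of a singular Sturm--Liouville problem on $[0,T]$ as $T\to 0$, analogous in spirit to, but structurally distinct from, the analysis in \cite{FMW}, where the mode equations arose from tubes around equatorial spheres $\sss^{n-1}\hookrightarrow\sss^n$ rather than tubes around the focal circle $\sss^1\hookrightarrow\sss^3$.
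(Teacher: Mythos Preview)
Your setup and strategy for the small-volume family are essentially the paper's: a Serrin map on perturbations of $\Omega_T$, its linearization at the trivial branch, diagonalization in Fourier modes on the Clifford torus, and Crandall--Rabinowitz once one shows that suitable eigenvalue curves cross zero transversally at points $\lambda_n\to 0$. The paper carries out exactly this computation in Section~\ref{Sec:EigenvaluesProperties} for modes depending only on $\xi$, and the ``main obstacle'' you flag is precisely what Proposition~\ref{Prop_Asymptotics_Sigma_1} resolves.

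Your argument for the large-volume family, however, contains a genuine error. The involution $\iota:(z_1,z_2)\mapsto(z_2,z_1)$ is an isometry of $\sss^3$ and therefore preserves volume; it cannot transport a small-volume Serrin domain to a large-volume one. You correctly note that $\iota$ sends $\Sigma_t$ to $\Sigma_{\pi/2-t}$, but then implicitly conclude it sends $\Omega_T=\bigcup_{t<T}\Sigma_t$ to $\Omega_{\pi/2-T}$. In fact
\[
\iota(\Omega_T)=\bigcup_{t<T}\Sigma_{\pi/2-t}=\bigcup_{s>\pi/2-T}\Sigma_s=\sss^3\setminus\overline{\Omega_{\pi/2-T}},
\]
which has the same small volume $2\pi^2\sin^2 T$ as $\Omega_T$. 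So the image of your first family under $\iota$ is again a family of small-volume Serrin domains, merely centred on the other focal circle; it does not yield the second family claimed in Theorem~\ref{Thm:Main}.

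The paper obtains the large-volume family by a second, independent bifurcation analysis: it considers modes depending only on $\eta$ rather than $\xi$ and proves an analogue of Proposition~\ref{Prop_Asymptotics_Sigma_1} in which the zeros $\lambda_n$ accumulate at $\pi/2$ instead of $0$ (Proposition~\ref{Prop_Asymptotics_Sigma_1_Second_Case}, proved in Appendix~\ref{Sec:AppB}). Although the two radial ODEs are formally related by $\theta\leftrightarrow\pi/2-\theta$, the regularity condition is always imposed at the focal set $\theta=0$ of $\Omega_\lambda$, which breaks this symmetry; hence the $\eta$-mode case requires its own asymptotic analysis and the involution does not provide a shortcut.
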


\begin{remark}
What seen so far led to another deep question:
\begin{center}
\begin{enumerate}
\item[5.] \textit{Is any Serrin domain of $\sss^n$ with connected and simply connected boundary a geodesic ball?}
\end{enumerate}
\end{center}
A possible answer is suggested by the very interesting work \cite{RSW}. Here D. Ruiz, P. Sicbaldi and J. Wu constructed some domains with connected and simply connected boundary that are non-isometric to geodesic balls and that support a positive solution to the homogeneous Dirichlet problem associated to the equation $-\Delta u =\frac{u^p-u}{\e}$, for certain positive parameters $p$ and $\e$, and having constant normal derivative on $\partial \Omega$. The result is obtained by combining, in a very delicate way, a stereographic projection with a suitable bifurcation theorem (in this case the one by Krasnoselskii). Even if the differential equation considered is different from that of \eqref{Eq:Torsion}, this result seems to suggest that the answer to the question 5 might be \textit{No}.
\end{remark}


\subsection{Main tool and structure of the paper}
The strategy for proving Theorem \ref{Thm:Main} is similar to the one adopted in \cite{FMW} and is based on the following version of the Crandall-Rabinowitz bifurcation theorem, stated in this form in \cite[Theorem 6.1]{FMW} and equivalent to the original one \cite[Theorem 1.7]{CR71}.

\begin{theorem}[Crandall-Rabinowitz]\label{Thm:CRBifurcation}
Let $(A,\norm{\cdot}{A})$ and $(B,\norm{\cdot}{B})$ be two Banach spaces, $\mathcal{O}\subset \rr \times A$ an open set and $I\subset \rr$ an open interval so that $I\times \{0\}\subset \mathcal{O}$. Denote by $(\lambda,\phi)$ the elements of $ \rr \times A$. Consider $G\in C^2(\mathcal{O};B)$ and suppose there exist $\lambda_*\in I$ and $a_*\in A\setminus\{0\}$ so that
\begin{enumerate}
\item $G(\lambda,0)=0$ for all $\lambda \in I$;
\item $\textnormal{ker}(D_\phi G(\lambda_*,0))=\textnormal{Span}_\rr\{a_*\}$;
\item $\textnormal{codim}(\textnormal{Im}(D_\phi G(\lambda_*,0)))=1$;
\item $D_\lambda D_\phi G(\lambda_*,0)(a_*) \notin \textnormal{Im}(D_\phi G(\lambda_*,0))$.
\end{enumerate}
Then, for any complement $\mathcal{C}$ of the space $\textnormal{Span}_\rr\{a_*\}$ in $A$ there exists a continuous curve
\begin{align}
(-\e,\e) & \to \left(\rr \times \mathcal{C}\right)\cap \mathcal{O}\\
s & \mapsto (\lambda(s),w(s))
\end{align}
such that
\begin{enumerate}
\item $\lambda(0)=\lambda_*$ and $w(0)=0$;
\item $s(a_*+w(s)) \in \mathcal{O}$ and $\lambda(s) \in I$;
\item $G\Big(\lambda(s), s(a_*+w(s))\Big)=0$;
\item there exists a neighbourhood of $(\lambda_*,0)$ in $\rr \times A$ where every solution to $G(\lambda,\phi)=0$ is either in $\left\{\Big(\lambda(s), s(a_*+w(s))\Big)\ :\ s\in (-\e,\e)\right\}$ or in  $\rr\times \{0\}$.
\end{enumerate}
\end{theorem}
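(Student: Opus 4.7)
The plan is a Lyapunov--Schmidt reduction followed by a single application of the implicit function theorem in Banach spaces, which is the classical route to Crandall--Rabinowitz. Fix a complement $\mathcal{C}$ of $\textnormal{Span}_\rr\{a_*\}$ in $A$, so that every $\phi \in A$ close to $0$ can be uniquely written as $\phi = s(a_* + w)$ with $s \in \rr$ small and $w \in \mathcal{C}$ small. Using $G(\lambda, 0) = 0$ from hypothesis (1), define the rescaled map
\[
H(s,\lambda, w) = \begin{cases} s^{-1} G(\lambda, s(a_*+w)), & s \neq 0, \\ D_\phi G(\lambda, 0)(a_*+w), & s = 0, \end{cases}
\]
on an open neighbourhood of $(0,\lambda_*,0)$ in $\rr \times \rr \times \mathcal{C}$, with values in $B$. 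Since $G \in C^2$, a first-order Taylor expansion of $\phi \mapsto G(\lambda, \phi)$ at $\phi = 0$, together with analogous expansions for the partial derivatives in $(\lambda, w)$, shows that $H$ extends to a map of class $C^1$ near $(0, \lambda_*, 0)$.

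Next I would linearise $H$ at the base point. By hypothesis (2) one has $D_\phi G(\lambda_*, 0)(a_*) = 0$, so $H(0, \lambda_*, 0) = 0$. A direct differentiation gives
\[
D_{(\lambda, w)} H(0, \lambda_*, 0)(\mu, v) = \mu\, D_\lambda D_\phi G(\lambda_*, 0)(a_*) + D_\phi G(\lambda_*, 0)(v).
\]
This must be shown to be a Banach-space isomorphism from $\rr \times \mathcal{C}$ onto $B$. Since $\mathcal{C}$ is a complement of $\textnormal{ker}(D_\phi G(\lambda_*, 0)) = \textnormal{Span}_\rr\{a_*\}$, the restriction $D_\phi G(\lambda_*, 0)|_\mathcal{C}$ is a continuous bijection onto $\textnormal{Im}(D_\phi G(\lambda_*, 0))$, which is closed of codimension one by hypothesis (3); hypothesis (4) then says that $D_\lambda D_\phi G(\lambda_*, 0)(a_*)$ spans a topological complement of this image in $B$. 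Combining the two gives the required isomorphism, whose inverse is automatically bounded by the open mapping theorem.

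The implicit function theorem applied to $H(s, \lambda, w) = 0$ at $(0, \lambda_*, 0)$ then produces $\e > 0$ and a unique $C^1$ curve $s \mapsto (\lambda(s), w(s))$ on $(-\e, \e)$ with $\lambda(0) = \lambda_*$, $w(0) = 0$, and $H(s, \lambda(s), w(s)) \equiv 0$. Multiplying by $s$ yields $G(\lambda(s), s(a_* + w(s))) = 0$, which together with continuity of $\lambda(s)$ and $w(s)$ establishes conclusions (1)--(3). For the local uniqueness statement (4), any solution $(\lambda, \phi)$ of $G(\lambda, \phi) = 0$ sufficiently close to $(\lambda_*, 0)$ with $\phi \neq 0$ decomposes uniquely as $\phi = s(a_* + w)$ with $(s, w)$ small and $s \neq 0$; such a decomposition then satisfies $H(s, \lambda, w) = 0$, so the uniqueness clause in the implicit function theorem forces $(\lambda, w) = (\lambda(s), w(s))$, while $\phi = 0$ gives the trivial branch $\rr \times \{0\}$.

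The main technical point, and the step I would be most careful about, is the $C^1$-regularity of $H$ across $\{s = 0\}$: it requires a Taylor expansion of the $C^2$ map $G$ with uniform (in $\lambda$) control of the remainder in the operator topology of $\mathcal{L}(A, B)$, and an analogous argument for the mixed derivatives entering $\partial_\lambda H$ and $\partial_w H$. Once this regularity is established, the verification of the four hypotheses translates cleanly into invertibility of $D_{(\lambda, w)} H$ at $(0, \lambda_*, 0)$, and the remainder of the proof is a transparent application of the implicit function theorem.
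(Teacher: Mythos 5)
The paper does not actually prove Theorem~\ref{Thm:CRBifurcation}: it is quoted from \cite[Theorem 6.1]{FMW} (itself equivalent to \cite[Theorem 1.7]{CR71}) and used as a black box, so there is no internal proof to compare against. Your reconstruction takes the classical single-rescaling route: define $H(s,\lambda,w)=s^{-1}G(\lambda,s(a_*+w))$, extended by its Taylor limit $D_\phi G(\lambda,0)(a_*+w)$ at $s=0$; check that $H$ is $C^1$ and that $D_{(\lambda,w)}H(0,\lambda_*,0)$ is an isomorphism of $\rr\times\mathcal{C}$ onto $B$; and apply the implicit function theorem. The existence part (conclusions (1)--(3)), the computation of $D_{(\lambda,w)}H(0,\lambda_*,0)$, and the isomorphism argument via hypotheses (2)--(4) together with the open mapping theorem are all correct, and you rightly flag the $C^1$-regularity of $H$ across $\{s=0\}$ as the technically delicate point.

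There is, however, a genuine gap in your argument for the local uniqueness, conclusion (4). You assert that every solution $(\lambda,\phi)$ near $(\lambda_*,0)$ with $\phi\neq0$ ``decomposes uniquely as $\phi=s(a_*+w)$ with $(s,w)$ small and $s\neq0$'', but this is false as stated. Decomposing $\phi=s\,a_*+z$ along $A=\textnormal{Span}_\rr\{a_*\}\oplus\mathcal{C}$, the candidate $w$ is $z/s$, which is small only when $\|z\|_A\ll|s|$, and is undefined when $s=0$, $z\neq0$. The set $\bigl\{\,s(a_*+w)\ :\ |s|<\e,\ \|w\|_A<\rho\,\bigr\}$ is a thin cone about $\textnormal{Span}_\rr\{a_*\}$, not a neighbourhood of $0$ in $A$, so the uniqueness clause of the implicit function theorem applied to $H$ says nothing about solutions whose $\mathcal{C}$-component is comparable to, or dominates, the $a_*$-component. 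The standard repair is to precede the rescaling by a genuine Lyapunov--Schmidt reduction in the coordinates $(s,z)$: project $G(\lambda,s\,a_*+z)=0$ onto $\textnormal{Im}(D_\phi G(\lambda_*,0))$ and solve for $z=z(\lambda,s)\in\mathcal{C}$ by the implicit function theorem (this step \emph{does} cover a full neighbourhood of $0$ in $A$), then observe from $z(\lambda,0)=0$ and $\partial_s z(\lambda_*,0)=0$ that $\|z(\lambda,s)\|_A=o(|s|)$ as $(\lambda,s)\to(\lambda_*,0)$. This shows a posteriori that all nontrivial nearby solutions lie inside the cone where your rescaled parameterization and its uniqueness clause apply, after which the argument closes. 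Without this step the proposal does not establish conclusion (4).
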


The paper is divided as follows. Section \ref{Sec:NotationPreliminaries} is devoted to define the notation we adopt in the whole paper and to introduce the functional operator $H$ and its linearization $\mathbb{L}$, needed for the bifurcation argument we are going to present. In Section \ref{Sec:NotationPreliminaries} we also study some spectral properties of the linearization $\mathbb{L}$ of $H$, exhibiting its spectrum in some specific cases. We stress that Section \ref{Sec:NotationPreliminaries} is strongly inspired by \cite{FMW}. Section \ref{Sec:EigenvaluesProperties} presents mainly the most delicate part, which will be crucial to the construction of the non-isoparametric Serrin domains whose volumes are close to $0$: the study of the spectrum of $\mathbb{L}$ under specific perturbations of the boundary of $\Omega_T$. To conclude, in Section \ref{Sec:Bifurcation} we introduce the functional setting we need in order to be able to apply the bifurcation argument. In Appendix \ref{Sec:AppA} we include some technical tools, originally proved in \cite{FMW} in a different setting and whose proofs are a straightforward adaptation of the original ones to this new setting. Appendix \ref{Sec:AppB} concerns the crucial part to construct the non-isoparametric Serrin domains whose volumes are close to $|\sss^3|$.



\section{Notation and preliminaries}\label{Sec:NotationPreliminaries}

The present section is aimed to lay the foundation to the geometric and functional settings we are going to deal with.

\subsection{Setting}
We start by considering the following parametrization of $\sss^3$
\begin{align}\label{Count_Param_1_KS_OLD}
\Upsilon: \left[0, \frac{\pi}{2}\right]\times \sss^1_\eta \times \sss^1_\xi & \to \sss^3\\
(\theta, \eta, \xi) &\mapsto \left(\begin{array}{c} \sin(\theta) \cos(\eta)\\ \sin(\theta) \sin(\eta)\\ \cos(\theta) \cos(\xi)\\ \cos(\theta) \sin(\xi) \end{array} \right),
\end{align}
looking at $\sss^1$ as $(-\pi,\pi]/_\sim$, where $\sim$ identifies $-\pi$ and $\pi$. We stress that the above parametrization provides an isoparametric foliation of $\sss^3$ by Clifford tori $\Sigma_t:=\Upsilon(t,\sss^1,\sss^1)$ that are parallel to each other.

Fix a smooth function $\phi:\sss^1\times \sss^1 \to \rr$ so that $0< \phi < \frac{\pi}{2}$ and define
\begin{align}
\Omega_\phi:=\left\{(\theta,\eta,\xi)\in \rr \times \sss^1_\eta \times \sss^1_\xi \ :\ \theta \in \Big[0, \phi(\eta,\xi) \Big)\right\}\big/ \sim,
\end{align}
where $\sim$ identifies $\{0\}\times \sss^1_\eta$ with a single point $o$, and hence $\{0\}\times \sss^1_\eta \times \sss^1_\xi$ is identified with a one-dimensional manifold $\Sigma_0=\{o\}\times \sss^1_\xi$. The domain $\Omega_\phi$ is naturally endowed with the pull-back metric $g=\Upsilon^*g^{\sss^3}$
\begin{align}
g_{(\theta, \eta, \xi)}=\textnormal{d}\theta^2+\sin^2(\theta)\textnormal{d}\eta^2+ \cos^2(\theta) \textnormal{d}\xi^2,
\end{align}
smoothly extended at $\Sigma_0$ (see \cite[Section 1.4.1, Proposition 1]{Pe}). The associated Laplace-Beltrami operator at a fixed point $(\theta, \eta, \xi)$ writes as
\begin{align}
\Delta^g_{(\theta,\eta,\xi)}&=\frac{\partial^2}{\partial \theta^2} + \left[\frac{\cos(\theta)}{\sin(\theta)}-\frac{\sin(\theta)}{\cos(\theta)} \right] \frac{\partial}{\partial \theta} + \frac{1}{\sin^2(\theta)} \frac{\partial^2}{\partial \eta^2}+\frac{1}{\cos^2(\theta)} \frac{\partial^2}{\partial \xi^2}.
\end{align}
In the particular case $\phi \equiv \lambda \in \left( 0,\frac{\pi}{2}\right)$ is constant, we get
\begin{align}
\Omega_{\lambda}= \left(\left[0, \lambda\right)\times \sss^1_\eta \times \sss^1_\xi\right)\big/ \sim
\end{align}
that through the map $\Upsilon$ (observe that $\Upsilon$ is well defined on the quotient space $\Omega_\lambda$) provides the ball of radius $\lambda$ around $\sss^1\hookrightarrow \sss^3$
\begin{align}
\Upsilon(\Omega_\lambda)=B_\lambda \Big(\sss^1 \Big).
\end{align}
Next steps consist in moving any information concerning the fixed function $\phi$ from the domain $\Omega_\phi$ into another Riemannian metric defined on a domain that does not depend on $\phi$. To this aim, we consider the map
\begin{align}\label{Eq_ParamPsi_KS_OLD}
\Psi^\phi:\Omega:=\left([0,1)\times \sss^1_\eta\times \sss^1_\xi\right)\Big/\sim & \to \Omega_\phi\\
(t,\eta,\xi)&\mapsto (t\phi(\eta,\xi),\eta,\xi),
\end{align}
where $\sim$ still denotes the identification of $\{0\}\times \sss^1_\eta$ with a point $o$. The map $\Psi^\phi$ defines a diffeomorphism between $\Omega_\phi$ and $\Omega$. In the case $\phi\equiv \lambda$ is constant, the pull-back metric $g^\lambda=(\Psi^\lambda)^* g$ is given by
\begin{align}
g^\lambda:=(\Psi^\lambda)^* g = \lambda^2 \dd t^2 + \sin^2(t\lambda) \dd \eta^2 + \cos^2(t\lambda) \dd \xi^2
\end{align}
and the associated Laplace-Beltrami operator is
\begin{align}
\Delta^{g^\lambda}&= \lambda^{-2} \pder{^2}{t^2} + \lambda^{-1} \left[\frac{\cos(t\lambda)}{\sin(t\lambda)}-\frac{\sin(t\lambda)}{\cos(t\lambda)} \right]\pder{}{t}+\frac{1}{\sin^2(t\lambda)} \pder{^2}{\eta^2}+\frac{1}{\cos^2(t\lambda)} \pder{^2}{\xi^2}.
\end{align}

Observe that the boundary of $\Omega$ has one connected component, which can be seen as the level set of the function
\begin{align}
f:\Omega\to \rr\\
(t,\eta,\xi)&\mapsto t.
\end{align}
Hence, the outward pointing normal field to $\partial \Omega$ can be defined as
\begin{align}
\nu_\phi(1,\eta,\xi) &:= \frac{\nabla^{g^\phi} f(1,\eta,\xi)}{||\nabla^{g^\phi} f(1,\eta,\xi)||_{g^\phi}}.
\end{align}
If $\phi\equiv \lambda$ is constant, it reduces to
\begin{align}
\nu_\lambda (1,\eta,\xi)&=\frac{1}{\lambda} \frac{\partial}{\partial t}\Big|_{(1,\eta,\xi)}.
\end{align}
By construction, in the parametrization $(\Omega_\phi, g)$ the outward pointing vector field $\mu_\phi$ normal to $\partial \Omega_\phi$ is given by
\begin{align}
\mu_\phi=\textnormal{d}\Psi^\phi [\nu_\phi]
\end{align}
that reduces to
\begin{align}
\mu_\lambda(\lambda,\eta,\xi)=\frac{\partial}{\partial \theta}\Big|_{(\lambda,\eta,\xi)} \quad \onn \partial \Omega_\lambda
\end{align}
if $\phi\equiv \lambda\in \left( 0,\frac{\pi}{2}\right)$ is constant.


\subsection{Operator $H$}

Fix a suitable $\phi$ as above and consider the solution $v$ to the torsion problem
\begin{align}\label{Eq:vTorsion}
\begin{cases}
-\Delta^g v=1 & \inn \Omega_\phi \\ v=0 & \onn \partial \Omega_\phi.
\end{cases}
\end{align}
If $\phi\equiv \lambda$ is constant, since $v$ is stable and $\Omega_\lambda$ is an isoparametric domain, it follows that the function $v$ is radial, in the sense that it only depends on the variable $\theta$ (see \cite{BP23, Bi24}). In particular, $v$ is explicit
\begin{align}
v(\theta, \eta, \xi)&=v(\theta)\\
&=\widetilde{v}(\theta)-\widetilde{v}(\lambda)\\
&=\frac{1}{2} \ln\left(\frac{\cos(\theta)}{\cos(\lambda)} \right),
\end{align}
where $\widetilde{v}(\theta)=\frac{1}{2} \ln\left(\cos(\theta)\right)$ satisfies
\begin{align}
-\Delta^g \widetilde{v}=1 \quad \inn \left(0,\frac{\pi}{2} \right)\times \sss^1_\eta \times \sss^1_\xi.
\end{align}
Moreover
\begin{align}
\pder{v}{\mu_\lambda}=\pder{v}{\theta}=-\frac{1}{2} \tan(\lambda) \quad \onn \partial \Omega_\lambda,
\end{align}
implying that $\Omega_\lambda$ is a Serrin domain.

Now we introduce an operator that will play a crucial role in our bifurcation argument. To this aim, consider the torsion problem in $\Omega$
\begin{align}\label{Eq:ulTorsion}
\begin{cases}
-\Delta^{g^\phi} u_\phi=1 & \inn \Omega\\
u_\phi=0 & \onn \partial \Omega.
\end{cases}
\end{align}
Since \eqref{Eq:vTorsion} is invariant with respect to isometries, it follows that $u_\phi$ is the pull-back of $v$ through $\Psi^\phi$: $u_\phi=(\Psi^\phi)^*v$. Hence, if $\phi\equiv \lambda$ is constant, then
\begin{align}
\pder{u_\lambda}{\nu_\lambda}=\textnormal{const.} \quad \onn \partial \Omega,
\end{align}
where the constant can be explicitly computed and turns out to be $-\frac{|\Omega|}{|\partial \Omega|}$. 

We define the operator $H:\mathcal{U}\subset C^{2,\alpha}(\sss^1\times \sss^1)\to C^{1,\alpha}(\sss^1\times \sss^1)$ as
\begin{align*}
H:\phi \mapsto \frac{\partial u_\phi}{\partial \nu_\phi}(1, \cdot, \cdot),
\end{align*}
where $\mathcal{U}\subseteq C^{2,\alpha}(\sss^1\times \sss^1)$ is a subspace to be fixed and $0 < \alpha < 1$.

Then the uniqueness of the solution $u_\phi$ of problem \eqref{Eq:ulTorsion} gives


\begin{lemma}\label{Lem_evenness}
The following properties hold:
\begin{enumerate}
\item If $\phi=\phi(\eta,\xi)$ is even in $\xi \in \sss^1 (= (-\pi,\pi]/_\sim)$, then both $u_\phi$ and $H(\phi)$ are as well;
\item If $\phi$ depends only on $\xi \in \sss^1$, then both  $u_\phi$ and $H(\phi)$ do;
\item The same propositions as (1) and (2) hold true if $\xi \in \sss^1$ is replaced with
$\eta \in \sss^1$.
\end{enumerate}
\end{lemma}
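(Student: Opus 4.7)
The plan is to reduce each claim to uniqueness of the solution of the torsion problem, applied in the presence of a suitable isometric symmetry that $\phi$ forces onto the ambient geometry.

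For part (1), suppose $\phi(\eta,\xi)=\phi(\eta,-\xi)$. Consider the reflection
\[
\tilde\sigma:(\theta,\eta,\xi)\mapsto(\theta,\eta,-\xi).
\]
Since the defining inequality $0\le \theta<\phi(\eta,\xi)$ is preserved by $\tilde\sigma$, the domain $\Omega_\phi$ is invariant. The metric
$g = d\theta^2+\sin^2\theta\, d\eta^2+\cos^2\theta\, d\xi^2$ is manifestly invariant under $\xi\mapsto-\xi$, so $\tilde\sigma$ is an isometry of $(\Omega_\phi,g)$. Consequently, if $v$ solves \eqref{Eq:vTorsion}, then $v\circ\tilde\sigma$ also solves \eqref{Eq:vTorsion}; uniqueness yields $v\circ\tilde\sigma=v$, i.e.\ $v$ is even in $\xi$. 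Since $u_\phi=(\Psi^\phi)^*v$ and
\[
u_\phi(t,\eta,-\xi)=v\bigl(t\phi(\eta,-\xi),\eta,-\xi\bigr)=v\bigl(t\phi(\eta,\xi),\eta,\xi\bigr)=u_\phi(t,\eta,\xi),
\]
$u_\phi$ is even in $\xi$. The normal field $\nu_\phi$ is determined by the metric $g^\phi=(\Psi^\phi)^*g$, which also inherits the evenness in $\xi$ (the map $\sigma:(t,\eta,\xi)\mapsto(t,\eta,-\xi)$ satisfies $\sigma^*g^\phi=g^\phi$ because $\Psi^\phi\circ\sigma=\tilde\sigma\circ\Psi^\phi$). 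Therefore $H(\phi)=\partial u_\phi/\partial\nu_\phi(1,\cdot,\cdot)$ is even in $\xi$ as well.

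For part (2), assume $\phi=\phi(\xi)$ only. Then for every $\eta_0\in\sss^1$ the rotation $\tilde\rho_{\eta_0}:(\theta,\eta,\xi)\mapsto(\theta,\eta+\eta_0,\xi)$ preserves both $\Omega_\phi$ (the defining constraint does not depend on $\eta$) and the metric $g$ (whose coefficients are independent of $\eta$), hence it is an isometry. Uniqueness of $v$ forces $v\circ\tilde\rho_{\eta_0}=v$ for every $\eta_0$, so $v$ depends only on $(\theta,\xi)$. Pulling back through $\Psi^\phi$ and arguing as in (1), $u_\phi$ depends only on $(t,\xi)$, and $H(\phi)$ depends only on $\xi$.

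Part (3) is entirely analogous, using the reflection $\xi\mapsto\xi$, $\eta\mapsto-\eta$ and the rotations in $\xi$; both preserve $g$ since the metric is separately invariant under translations/reflections in $\eta$ and $\xi$.

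The proof involves no real obstacle: the only small point to verify is that the pullback by $\Psi^\phi$ intertwines the symmetries of $\Omega_\phi$ (in the $(\theta,\eta,\xi)$ coordinates) with the corresponding symmetries of $\Omega$ (in the $(t,\eta,\xi)$ coordinates), which is immediate from the explicit form \eqref{Eq_ParamPsi_KS_OLD} of $\Psi^\phi$ together with the assumed symmetry of $\phi$.
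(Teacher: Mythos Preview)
Your proof is correct and follows exactly the approach the paper indicates: the lemma is stated right after the sentence ``Then the uniqueness of the solution $u_\phi$ of problem \eqref{Eq:ulTorsion} gives,'' so the paper's ``proof'' is precisely the uniqueness argument you have spelled out in detail, exploiting that the relevant reflections and rotations in $\eta$ or $\xi$ are isometries of $(\Omega_\phi,g)$ preserving the domain whenever $\phi$ has the corresponding symmetry.
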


If $\phi\equiv \lambda$ is constant, then $H$ is more specific as follows.


\begin{lemma}\label{Lem_RadialSolution_KS_OLD}
If $\phi\equiv \lambda$ is constant, then
\begin{align*}
u_\lambda(t,\eta,\xi)=(\Psi^{\lambda})^* v(\theta)= v(\theta)=v(t\lambda)
\end{align*}
and
\begin{align}
H(\lambda)(\eta,\xi)=\pder{v}{\theta}(\lambda)=-\frac{1}{2} \tan(\lambda).
\end{align}
\end{lemma}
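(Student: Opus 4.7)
The plan is to combine two facts already established in the preceding discussion with a short chain-rule computation. First, since problem \eqref{Eq:vTorsion} is invariant under isometries and $\Psi^{\lambda}$ is a diffeomorphism with $(\Psi^{\lambda})^* g = g^{\lambda}$, the identity $u_\phi = (\Psi^\phi)^* v$ specializes to $u_\lambda = v \circ \Psi^\lambda$. Using the explicit expression $\Psi^\lambda(t,\eta,\xi) = (t\lambda,\eta,\xi)$ and the fact, recalled just before the statement, that $v$ depends only on $\theta$ with
\begin{align}
v(\theta) = \frac{1}{2}\ln\!\left(\frac{\cos(\theta)}{\cos(\lambda)}\right),
\end{align}
one immediately obtains $u_\lambda(t,\eta,\xi) = v(t\lambda)$, which is the first assertion.

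For the second assertion, I would compute $H(\lambda)(\eta,\xi) = \frac{\partial u_\lambda}{\partial \nu_\lambda}(1,\eta,\xi)$ directly, using the expression $\nu_\lambda(1,\eta,\xi) = \frac{1}{\lambda}\frac{\partial}{\partial t}\big|_{(1,\eta,\xi)}$ derived in the setup. The chain rule then yields
\begin{align}
\frac{\partial u_\lambda}{\partial \nu_\lambda}(1,\eta,\xi) = \frac{1}{\lambda}\frac{d}{dt}\Big|_{t=1} v(t\lambda) = v'(\lambda) = \frac{\partial v}{\partial \theta}(\lambda),
\end{align}
and differentiating the explicit expression for $v$ at $\theta = \lambda$ gives $v'(\lambda) = -\frac{1}{2}\tan(\lambda)$, which is independent of $(\eta,\xi)$, as claimed.

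There is no real obstacle here: the statement is essentially a bookkeeping result that records, in the $\Omega$-picture, the Serrin property of the reference domain $\Omega_\lambda$ in the $\Omega_\phi$-picture. The only points requiring mild care are the distinction between the radial function $v(\theta)$ living on $\Omega_\lambda$ and its pull-back $u_\lambda$ living on the fixed domain $\Omega$, and the factor $\lambda$ that arises both in the parametrization $\Psi^\lambda$ and in the normal $\nu_\lambda$; these two factors cancel, producing the clean formula $H(\lambda) = v'(\lambda)$.
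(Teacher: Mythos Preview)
Your proof is correct and follows essentially the same approach as the paper: both reduce the first claim to the already-observed identity $u_\phi=(\Psi^\phi)^*v$ and the radiality of $v$, and both compute the second claim by pairing $u_\lambda$ with $\nu_\lambda=\lambda^{-1}\partial_t$ at $t=1$ (the paper writes this out via $g^\lambda(\nabla^{g^\lambda}u_\lambda,\nu_\lambda)$, you via the chain rule, which amounts to the same thing).
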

\begin{proof}
The first claim has already been observed. For the second claim we notice that
\begin{align}
H (\lambda)(\eta,\xi)=&g^\lambda\left(\nabla^{g^\lambda} u_\lambda, \nu_\lambda \right)_{(1,\eta,\xi)}\\
=&g^\lambda\left(\lambda^{-2} \frac{\partial}{\partial t} \left(v\left(t\lambda\right) \right)\frac{\partial}{\partial t}, \lambda^{-1} \frac{\partial}{\partial t}\right)_{(1,\eta,\xi)}\\
=& \pder{v}{\theta} (\lambda) g^\lambda \left(\lambda^{-1} \frac{\partial}{\partial t},\lambda^{-1} \frac{\partial}{\partial t} \right)_{(1,\eta,\xi)}\\
=& \pder{v}{\theta} (\lambda).
\end{align}
\end{proof}


\subsection{Linearization and spectrum}

The last step of this section is the computation of the linearization of the operator $H$. To this aim, fix $w\in C^{2,\alpha}(\sss^1\times \sss^1)$ and consider the $g^\lambda$-harmonic extension $\varphi^\lambda$ of $w$ in $\Omega$, i.e. the solution to
\begin{align}\label{Eq_HarmonicExtension_KS_OLD}
\begin{cases}
-\Delta^{g^\lambda} \varphi^\lambda =0 & \inn \Omega\\
\varphi^\lambda(1,\eta,\xi)=w(\eta,\xi) & \onn \partial \Omega.
\end{cases}
\end{align}
In the particular case in which $w(\eta,\xi)=A(\eta)B(\xi)$ is the product of two eigenfunctions $A$ and $B$ of $-\Delta^{\sss^1}$, the solution to the above problem can be expressed as
\begin{align}
\varphi^\lambda (t,\eta,\xi)=l^{\epsilon,\delta,\lambda}(t) w(\eta,\xi)
\end{align}
where, denoting by $\epsilon^2$ and $\delta^2$ the eigenvalues of $A$ and $B$ respectively, the function $l^{\epsilon,\delta,\lambda}$ satisfies
\begin{align}\label{Eq:l_1}
\begin{cases}
(l^{\epsilon,\delta,\lambda})''(t) + \lambda \left[\frac{\cos(t\lambda)}{\sin(t\lambda)}-\frac{\sin(t\lambda)}{\cos(t\lambda)}\right] (l^{\epsilon,\delta,\lambda})'(t) \\
\quad \quad \quad \quad - \lambda^{2}\left[\frac{\epsilon^2}{\sin^2(\lambda t)} + \frac{\delta^2}{\cos^2(\lambda t)}\right] l^{\epsilon,\delta,\lambda} (t)=0 & \inn (0,1) \\
l^{\epsilon,\delta,\lambda} (1)=1.
\end{cases}
\end{align}
The existence of such solutions follows from standard ODEs theory.

Next proposition is one of the crucial points. We postpone the proof to Appendix \ref{Sec:AppA}, since it is a slight modification of \cite[Proposition 2.4]{FMW}.


\begin{proposition}\label{Prop_Linearization_KS_OLD}
For any $\lambda\in \left(0, \frac{\pi}{2}\right)$ the linear operator
\begin{align}
\mathbb{L}_\lambda:=D_\phi \Big|_{\phi\equiv \lambda} H :C^{2,\alpha}(\sss^1\times \sss^1)\to C^{1,\alpha}(\sss^1\times \sss^1)
\end{align}
is given by
\begin{align}\label{Eq_Prop_Linearization_KS_OLD}
\mathbb{L}_\lambda[w](\eta,\xi)&=-\pder{\widetilde{v}}{\theta}(\lambda)\frac{1}{\lambda} \pder{\varphi^\lambda}{t}(1,\eta,\xi) +\pder{^2\widetilde{v}}{\theta^2}(\lambda) w(\eta, \xi)\\
&=\frac{\tan(\lambda)}{2\lambda} \pder{\varphi^\lambda}{t}(1,\eta,\xi) - \frac{1}{2} \frac{1}{\cos^2(\lambda)} w(\eta,\xi)
\end{align}
for any $w\in C^{2,\alpha}(\sss^1\times \sss^1)$ and $(\eta,\xi)\in \sss^1\times \sss^1$, where $\varphi^\lambda$ is the solution to \eqref{Eq_HarmonicExtension_KS_OLD}.
\end{proposition}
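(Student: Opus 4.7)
The plan is to compute the Gateaux derivative of $H$ at $\phi \equiv \lambda$ in direction $w$ by setting $\phi_s := \lambda + sw$ and differentiating $H(\phi_s)$ at $s = 0$. The first move is to pass from the fixed-domain/varying-metric picture on $\Omega$ to a varying-domain/fixed-metric picture in $\sss^3$: because $\Psi^\phi: (\Omega, g^\phi) \to (\Omega_\phi, g)$ is an isometry and $u_\phi = (\Psi^\phi)^* v_\phi$, one has
\begin{equation}
H(\phi)(\eta,\xi) \;=\; \frac{\partial v_\phi}{\partial \mu_\phi}\bigl(\phi(\eta,\xi),\eta,\xi\bigr),
\end{equation}
where $v_\phi$ solves the torsion problem on $\Omega_\phi \subset \sss^3$ for the round metric. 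This recasts the computation as a classical Hadamard boundary variation, with $\partial \Omega_{\phi_s}$ moving in the $\partial_\theta$-direction at velocity $w$.

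The second step identifies the Eulerian shape derivative $v' := \frac{d}{ds}\big|_{s=0} v_{\phi_s}$ on $\Omega_\lambda$. Differentiating the interior equation gives $-\Delta v' = 0$ in $\Omega_\lambda$, and differentiating the identity $v_{\phi_s}(\phi_s(\eta,\xi),\eta,\xi)=0$ at $s=0$ supplies the Dirichlet condition $v'(\lambda,\eta,\xi) = -\tilde v'(\lambda)\, w(\eta,\xi)$. Pulling back by $\Psi^\lambda$, the function $(\Psi^\lambda)^* v'$ is $g^\lambda$-harmonic on $\Omega$ with boundary value $-\tilde v'(\lambda)\, w$, so by uniqueness it equals $-\tilde v'(\lambda)\, \varphi^\lambda$. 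The chain rule $\partial_t = \lambda\, \partial_\theta$ along $\Psi^\lambda$ then yields
\begin{equation}
\frac{\partial v'}{\partial \theta}(\lambda,\eta,\xi) \;=\; -\,\frac{\tilde v'(\lambda)}{\lambda}\, \frac{\partial \varphi^\lambda}{\partial t}(1,\eta,\xi).
\end{equation}

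To conclude, differentiate $H(\phi_s)(\eta,\xi) = g(\mu_s, \nabla v_{\phi_s})|_{p_s}$ at $s = 0$, where $p_s := (\phi_s(\eta,\xi),\eta,\xi)$. Since the evaluation point itself moves, the $s$-derivative decomposes as
\begin{equation}
\mathbb{L}_\lambda[w](\eta,\xi) \;=\; g(\dot\mu,\, \nabla v_0)\big|_{p_0} \;+\; g\bigl(\mu_\lambda,\, (\nabla v)' + \nabla_X \nabla v_0\bigr)\big|_{p_0},
\end{equation}
with $X = w\, \partial_\theta$ the extended deformation vector field. The first summand vanishes: $|\mu_s|_g \equiv 1$ forces $\dot\mu \perp \mu_\lambda$, while $\nabla v_0 = \tilde v'(\lambda)\, \mu_\lambda$ is purely normal by the radial symmetry of $v_0$. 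In the second summand, $\nabla_{\partial_\theta} \partial_\theta = 0$ in the metric $g$ yields $\nabla_X \nabla v_0 = w\, \tilde v''(\lambda)\, \partial_\theta$, so the bracket reduces to $\partial_\theta v'(\lambda,\eta,\xi) + w\, \tilde v''(\lambda)$. Substituting the earlier identification of $\partial_\theta v'$ together with the explicit values $\tilde v'(\lambda) = -\tfrac12 \tan\lambda$ and $\tilde v''(\lambda) = -\tfrac{1}{2\cos^2 \lambda}$ produces the stated formula.

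The main delicacy lies in the clean separation of Eulerian from material (Lagrangian) derivatives in the composite quantity $g(\mu_s, \nabla v_{\phi_s})(p_s)$, and in verifying that the map $s \mapsto v_{\phi_s}$ is $C^1$ into a Hölder space so that $v' \in C^{2,\alpha}(\overline{\Omega_\lambda})$; both are standard in shape calculus and are adapted from \cite[Proposition 2.4]{FMW} with only cosmetic changes dictated by the different ambient geometry.
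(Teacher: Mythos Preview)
Your proof is correct and rests on the same two core ingredients as the paper's: the harmonic extension with boundary datum $-\tilde v'(\lambda)\,w$, and the orthogonality $\dot\mu\perp\mu_\lambda$ forced by $|\mu_s|_g\equiv 1$. The organization, however, is different. The paper stays entirely in the fixed-domain picture $(\Omega,g^\phi)$: it splits $u_\phi=a_\phi+u^\phi$ with $u^\phi=(\Psi^\phi)^*\tilde v$, observes that $a_\lambda\equiv-\tilde v(\lambda)$ is constant so that the metric-variation terms in $D_\phi(\Delta^{g^\phi}a_\phi)$ vanish, identifies $D_\phi|_{\phi=\lambda}a_\phi[w]=-\tilde v'(\lambda)\varphi^\lambda$, and treats the $u^\phi$-piece in a separate lemma (Lemma~\ref{Lem_App2_Linearization_KS_OLD}). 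You instead pass through the isometry to the varying-domain picture $(\Omega_\phi,g)$ and run a direct Hadamard shape variation, where the two pieces of the paper's splitting appear together as the Eulerian term $\partial_\theta v'$ and the convective term $g(\mu_\lambda,\nabla_X\nabla v_0)=w\,\tilde v''(\lambda)$. Your route is slightly more streamlined---no auxiliary splitting, no need to argue separately that the $D_\phi g^\phi$-terms drop out---at the cost of invoking the shape-derivative machinery (well-posedness of $v'$ up to the boundary) that the paper's fixed-domain formulation sidesteps.
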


Now assume that $w(\eta,\xi)=A(\eta)B(\xi)$ is the product of two eigenfunctions $A$ and $B$ of $-\Delta^{\sss^1}$, with associated eigenvalues $\e^2$ and $\delta^2$. As already seen, the solution $\varphi^\lambda$ to \eqref{Eq_HarmonicExtension_KS_OLD} takes the form $\varphi^\lambda (t,\eta,\xi)=l^{\epsilon,\delta,\lambda}(t) w(\eta,\xi)$. In particular, denoting $l:=l^{\epsilon,\delta,\lambda}$, thanks to \eqref{Eq_Prop_Linearization_KS_OLD} it follows that
\begin{align}
\mathbb{L}_\lambda[w](\eta,\xi)=&-\pder{\widetilde{v}}{\theta}(\lambda)\frac{1}{\lambda} \pder{\varphi^\lambda}{t}(1,\eta,\xi) +\pder{^2\widetilde{v}}{\theta^2}(\lambda) w(\eta, \xi)\\
=& - \pder{\widetilde{v}}{\theta}(\lambda)\frac{1}{\lambda} l'(1) w(\eta,\xi)+\pder{^2 \widetilde{v}}{\theta^2}(\lambda) w(\eta, \xi)\\
=&\left( \frac{\tan(\lambda)}{2\lambda} l'(1)-\frac{1}{2\cos^2(\lambda)}\right)w(\eta,\xi).
\end{align}
Hence, $w$ is an eigenfunction of $\mathbb{L}$ with associated eigenvalue
\begin{align}\label{Eq_sigma}
\sigma:=\frac{\tan(\lambda)}{2\lambda} l'(1)-\frac{1}{2} \frac{1}{\cos^2(\lambda)}.
\end{align}
We stress that $\sigma$ depends on $\e, \delta$ and $\lambda$.



\section{Properties of the eigenvalues}\label{Sec:EigenvaluesProperties}

We proceed with the study of the eigenvalue $\sigma$ in the specific case in which $w=w(\eta,\xi)$ is either one of the following
\begin{align}
\cos(n\xi), \quad \quad &\sin(n\xi),\\
\cos(n\eta), \quad \quad &\sin(n\eta),
\end{align}
where $n\in \mathbb{N}\cup \{0\}$. As already mentioned, in this particular cases the function $w$ is an eigenfunction of the operator $\mathbb{L}$. 

\subsection{First case: $w=w(\xi)$} 
The first family of domains we present is obtained starting from $w=\cos(n\xi)$ or $w=\sin(n\xi)$.

\subsubsection{Study of the function $l$}\label{Subsec:L*} 
 Let $\psi=l(t)w(\xi)$ be the solution to
\begin{align}\label{Eq_HarmonicExtension}
\begin{cases}
-\Delta^{g^\lambda} \psi=0 & \inn \Omega\\ \psi=w & \onn \partial \Omega.
\end{cases}
\end{align}
In particular, $l$ satisfies
\begin{align}
\begin{cases}
\lambda^{-2} l''(t) + \lambda^{-1} \left[\frac{\cos(t\lambda)}{\sin(t\lambda)}-\frac{\sin(t\lambda)}{\cos(t\lambda)} \right] l'(t) - \frac{n^2}{\cos^2(t\lambda)} l(t)=0 & \inn (0,1)\\
l(1)=1.
\end{cases}
\end{align}
Set $\theta=t\lambda$ and $L(\theta):=l\left(\frac{\theta}{\lambda}\right)$. Then
\begin{align}
L'(\theta)=\frac{1}{\lambda} l'\left(\frac{\theta}{\lambda} \right) \quad \andd \quad L''(\theta)=\frac{1}{\lambda^2}l''\left(\frac{\theta}{\lambda} \right)
\end{align}
and hence
\begin{align}\label{Eq:L_ball}
\begin{cases}
L''(\theta)+\left[ \frac{\cos(\theta)}{\sin(\theta)}-\frac{\sin(\theta)}{\cos(\theta)}\right] L'(\theta) - \frac{n^2}{\cos^2(\theta)} L(\theta)=0 & \inn (0,\lambda)\\
L(\lambda)=1.
\end{cases}
\end{align}
Notice that the differential equation of  \eqref{Eq:L_ball}  can be written as
\begin{align}\label{Eq:L_ball_short}
\left(\sin(2\theta) L'(\theta)\right)'=\sin(2\theta)\frac{n^2}{\cos^2(\theta)}L(\theta).
\end{align}
In what follows we are going to study the solution of the differential equation of \eqref{Eq:L_ball} in the whole $\left(0,\frac{\pi}{2}\right)$ (and not only in $(0,\lambda)$). Let us fix the solution $L^*$ to
\begin{align}\label{Eq:L^*_ball}
\begin{cases}
(L^*)''(\theta)+\left[ \frac{\cos(\theta)}{\sin(\theta)}-\frac{\sin(\theta)}{\cos(\theta)}\right] (L^*)'(\theta) - \frac{n^2}{\cos^2(\theta)} L^*(\theta)=0 & \inn \left(0,\frac{\pi}{2}\right)\\
L^*\left(\frac{\pi}{4}\right)=1\\
L^*\ \textnormal{regular at } \theta=0,
\end{cases}
\end{align}
see \cite[Theorem 2, Section 7.3]{AD12}. Denote
\begin{align}
&B(\theta)=\theta \left[\frac{\cos(\theta)}{\sin(\theta)}-\frac{\sin(\theta)}{\cos(\theta)} \right] \quad \quad \andd \quad \quad C(\theta)=-\theta^2 \frac{n^2}{\cos^2(\theta)}
\end{align}
and observe that
\begin{align}
B(0)=1, \quad C(0)=0 \quad \andd \quad L''(\theta)+\frac{B(\theta)}{\theta}L'(\theta) + \frac{C(\theta)}{\theta^2}L(\theta)=0.
\end{align}
The associated indicial equation is
\begin{align}
0=r(r-1)+B(0)r+C(0)=r(r-1)+r=r^2
\end{align}
and its root is $r=0$ with multiplicity 2. By Theorem 2 in \cite[Section 7.3]{AD12}, we have two independent Frobenius solutions near $\theta=0$:
\begin{align}
& L_1(\theta)=\sum_{j\geq 0} c_j \theta^j\\
&L_2(\theta)=L_1(\theta)\ln(\theta)+\sum_{j\geq 0} C_j \theta^j
\end{align}
with $c_0\neq 0$ and $C_0\neq 0$. Since we require $L^*$ to be regular at $\theta=0$, it follows that
\begin{align}\label{Eq_L=L_1}
L^*(\theta)=C^* L_1(\theta),
\end{align}
where $C^*$ is a constant with $C^*\not=0$. In particular, $L^*$ is analytic with $L^*(0)=C^*c_0\neq 0$. By continuation $L^*$ exists on $\left(0, \frac{\pi}{2} \right)$.


\begin{proposition}[Properties of $L^*$]\label{Prop:Properties_L_ball}
Let $n \in \mathbb N$. The following properties hold
\begin{enumerate}
\item $(L^*)'(0)=0$;
\item $L^*>0$ in $\left[0,\frac{\pi}{2} \right]$ and $(L^*)''(0)>0$;
\item $(L^*)'>0$ in $\left( 0,\frac{\pi}{2}\right)$.
\end{enumerate}
\end{proposition}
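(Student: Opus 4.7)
The plan is to use the divergence form (\ref{Eq:L_ball_short}) of the ODE as a single workhorse identity from which all three properties follow: integrating from $0$ to $\theta$ gives
\begin{align}
\sin(2\theta)(L^*)'(\theta)=n^2 \int_0^\theta \frac{\sin(2s)}{\cos^2 s}\,L^*(s)\,ds. \tag{$\star$}
\end{align}
The boundary term at $0$ vanishes because the Frobenius analysis preceding the proposition (equation (\ref{Eq_L=L_1})) tells us that $L^*=C^*L_1$ is analytic at $\theta=0$, so $(L^*)'$ is bounded near $0$ and $\lim_{s\to 0^+}\sin(2s)(L^*)'(s)=0$.

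First I would read off (1) and the formula for $(L^*)''(0)$ from ($\star$). Dividing both sides by $\sin(2\theta)\sim 2\theta$ and using $\frac{\sin(2s)}{\cos^2 s}=2s+O(s^3)$ near $0$, the right-hand side of ($\star$) divided by $\sin(2\theta)$ behaves like $\frac{n^2 L^*(0)}{2}\,\theta+O(\theta^3)$. This forces $(L^*)'(0)=0$, and comparing the $\theta$-coefficients also yields
\begin{align}
(L^*)''(0)=\frac{n^2}{2}L^*(0),
\end{align}
which will give the positivity of $(L^*)''(0)$ as soon as $L^*(0)>0$ is established.

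Next I would prove positivity of $L^*$ on $[0,\pi/2)$ by a sign-contradiction argument using ($\star$). Recall that $L^*(0)=C^*c_0\neq 0$ by (\ref{Eq_L=L_1}). If instead one assumed $L^*(0)<0$, continuity together with $L^*(\pi/4)=1>0$ would produce a smallest zero $\theta_0\in(0,\pi/4]$ with $L^*<0$ on $[0,\theta_0)$. The right-hand side of ($\star$) at $\theta_0$ would then be strictly negative, forcing $(L^*)'(\theta_0)<0$; but a function passing from negative to zero from the left must satisfy $(L^*)'(\theta_0)\geq 0$, contradiction. Hence $L^*(0)>0$. The same argument, applied now to the \emph{smallest} zero $\theta_0\in(0,\pi/2)$ of $L^*$ (if one existed), gives $(L^*)'(\theta_0)>0$ from ($\star$) while $L^*>0$ on $[0,\theta_0)$ and $L^*(\theta_0)=0$ force $(L^*)'(\theta_0)\leq 0$. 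This rules out zeros and yields $L^*>0$ on $[0,\pi/2)$, whence $(L^*)''(0)=\tfrac{n^2}{2}L^*(0)>0$, completing (2). Finally, once $L^*>0$ is known, the right-hand side of ($\star$) is strictly positive for every $\theta\in(0,\pi/2)$, and since $\sin(2\theta)>0$ there, we obtain $(L^*)'>0$ on $(0,\pi/2)$, i.e.\ (3).

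The only delicate point is the sign-determination step: a priori $C^*$ is only known to be nonzero, so one must exclude a negative choice, and this is exactly where the divergence form ($\star$) is indispensable, because the usual maximum-principle arguments are obstructed by the ODE being singular at both endpoints $0$ and $\pi/2$. The behavior at $\pi/2$ itself requires no extra work: either $L^*$ extends continuously with a positive limit (forced by monotonicity and $L^*(0)>0$), or it diverges to $+\infty$, and in either case the positivity claim in $[0,\pi/2]$ holds wherever $L^*$ is defined.
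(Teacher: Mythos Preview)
Your proof is correct and, in fact, more streamlined than the paper's. Both arguments rest on the divergence form \eqref{Eq:L_ball_short}, but you exploit it in integrated form $(\star)$ as a single workhorse, whereas the paper uses a patchwork: part (1) and the identity $(L^*)''(0)=\tfrac{n^2}{2}L^*(0)$ are obtained by multiplying the ODE by $\theta$ and passing to the limit; positivity on $[0,\pi/4)$ is deduced from a comparison principle (using $L\equiv 0$ and $L\equiv 1$ as sub/supersolution with the Neumann data $L'(0)=0$); and then the open set $\mathcal{L}^+=\{L^*>0,\ (L^*)'>0\}$ is shown to equal $(0,\pi/2)$ by a local ODE argument at a putative right endpoint. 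Your integral-identity route replaces all of this with two short sign-contradiction steps, and it sidesteps the delicacy of invoking a comparison principle for an equation that is singular at the origin (the paper has to appeal to the second-derivative formula at $0$ to make that comparison work). What the paper's approach buys in return is an explicit a priori bound $L^*<1$ on $[0,\pi/4)$, which you do not obtain, but this is not needed anywhere downstream.
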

\begin{proof}
Multiplying the differential equation of \eqref{Eq:L^*_ball} by $\theta$ and letting $\theta \to 0^+$ yield that $(L^*)'(0)=0$.
Hence by letting  $\theta \to 0^+$ in the differential equation of  \eqref{Eq:L^*_ball} we have 
\begin{align}\label{the second derivative at 0}
(L^*)^{\prime\prime}(0)=\frac 12n^2L^*(0).
\end{align}

Next, define the set 
\begin{align}
\mathcal{L}^+:=\left\{\theta \in \left(0,\frac{\pi}{2}\right)\ :\ L^*(\theta)>0\ \ \andd\ \ (L^*)'(\theta)>0\right\}.
\end{align}
We claim that $\mathcal{L}^+=\left(0,\frac{\pi}{2}\right)$. Indeed, since $L^*\left(\frac{\pi}{4} \right)=1$, we know that
\begin{itemize}
\item $L(\theta)\equiv 1$ is a supersolution to the differential equation of \eqref{Eq:L^*_ball} in $\left[0,\frac{\pi}{4} \right]$ with boundary condition $L'(0)=0$ and satisfying $L\left(\frac{\pi}{4}\right)=1$;

\item $L(\theta)\equiv 0$ is a solution to the differential equation of \eqref{Eq:L^*_ball} in $\left[0,\frac{\pi}{4} \right]$ with boundary condition $L'(0)=0$ and satisfying $L\left(\frac{\pi}{4}\right)=0$.
\end{itemize}
Since  $L^*(0)\neq 0$, by the comparison principle with the aid of \eqref{the second derivative at 0}, we obtain
\begin{align}\label{Eq:0<L<=1}
0<L^*(\theta)< 1 \quad \inn \left[0,\frac{\pi}{4}\right),
\end{align}
and $(L^*)''(0)>0$ from \eqref{the second derivative at 0}.
By \eqref{Eq:L_ball_short}, the function $\sin(2\theta)(L^*)'$ is increasing in $\left(0,\frac{\pi}{4}\right)$ and hence $(L^*)'>0$ in $\left(0,\frac{\pi}{4}\right)$, implying that $\left(0,\frac{\pi}{4}\right)\subset \mathcal{L}^+$. Now let $(0,b)$ be the connected component of $\mathcal{L}^+$ containing $\left(0,\frac{\pi}{4}\right)$. Clearly, $L^*(b)>0$. If $b<\frac{\pi}{2}$, then $(L^*)'(b)=0$ and so $(L^*)''(b)\leq 0$: this contradicts the differential equation of  \eqref{Eq:L^*_ball}. It follows that $b=\frac{\pi}{2}$ and hence $\mathcal{L}^+=\left(0,\frac{\pi}{2}\right)$. 
\end{proof}


\begin{proposition}[Asymptotic of $L^*$ near $\frac{\pi}{2}$]\label{Prop:Behav_L*_ball}
Let $n \in \mathbb N$.  Set $s=\frac{\pi}{2}-\theta>0$ and $\widetilde{L}^*(s)=L^*(\theta)=L^*\left(\frac{\pi}{2}-s\right)$. Then, $\widetilde{L}^*$ satisfies
\begin{align}
\left(\widetilde{L}^*\right)''(s)-\left[\frac{\sin(s)}{\cos(s)}-\frac{\cos(s)}{\sin(s)}\right] \left(\widetilde{L}^*\right)'(s)-\frac{n^2}{\sin^2(s)}\widetilde{L}^*(s)=0
\end{align}
and, as $s\to 0^+$,
\begin{align}
\widetilde{L}^*(s)\sim c_* s^{-n} \quad and \quad \left(\widetilde{L}^*\right)'(s) \sim -nc_* s^{-n-1}
\end{align}
for some $c_*>0$.
\end{proposition}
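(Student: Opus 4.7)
My plan is a three-step argument: a change of variables to transform the ODE, a Frobenius analysis at the singular endpoint $s = 0$, and a positivity argument to extract the correct leading order. The first step is routine: substituting $\theta = \pi/2 - s$ in \eqref{Eq:L^*_ball}, using $\cos(\theta) = \sin(s)$, $\sin(\theta) = \cos(s)$ together with the sign change on the first derivative, yields directly the ODE stated in the proposition.

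For the second step, I would identify $s = 0$ as a regular singular point of the transformed ODE. From the expansions $2s\cot(2s) = 1 + O(s^2)$ and $s^2/\sin^2(s) = 1 + O(s^2)$, the indicial equation reads $r(r-1) + r - n^2 = r^2 - n^2 = 0$, with roots $r = \pm n$. Since these differ by the integer $2n$, standard Frobenius theory (see for instance \cite[Section 7.3]{AD12}) produces two linearly independent solutions in a neighborhood of $0$:
\begin{align}
\tilde{L}_1(s) = s^n\bigl(1 + O(s^2)\bigr), \qquad \tilde{L}_2(s) = s^{-n}\bigl(1 + O(s^2)\bigr) + \gamma\,\tilde{L}_1(s)\log(s),
\end{align}
for some $\gamma \in \rr$, with convergent power series expansions.

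Extracting the asymptotic amounts to writing $\tilde{L}^* = \alpha \tilde{L}_1 + \beta \tilde{L}_2$. By Proposition \ref{Prop:Properties_L_ball}, $L^*$ is positive and increasing on $\left[0,\frac{\pi}{2}\right)$, so $\tilde{L}^*(s) = L^*(\pi/2 - s) \geq L^*(\pi/4) = 1$ for every $s \in (0, \pi/4]$. In particular $\tilde{L}^*(s) \not\to 0$ as $s \to 0^+$, whereas $\alpha\tilde{L}_1(s) \to 0$; hence $\beta \neq 0$. Positivity of $\tilde{L}^*$ near $0$, combined with $\tilde{L}^*(s)/s^{-n} \to \beta$, then forces $\beta > 0$. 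Setting $c_* := \beta$ and using $|\gamma\tilde{L}_1(s)\log(s)| = O(s^n|\log s|) = o(s^{-n})$, I obtain the first asymptotic. The second follows by differentiating the convergent Frobenius series term by term, the logarithmic contribution remaining subleading since it is $O(s^{n-1}|\log s|) = o(s^{-n-1})$.

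The main obstacle is twofold. First, a possible logarithmic term in $\tilde{L}_2$ could in principle compete with the leading order, but it is killed by the elementary bound $s^n|\log s| \ll s^{-n}$ valid for $n \geq 1$. Second, one must rule out the degenerate case $\beta = 0$; here the positivity and monotonicity of $L^*$ from Proposition \ref{Prop:Properties_L_ball} provide exactly the obstruction needed. This is the conceptually essential input: without it, the statement would fail, because the equation does admit a solution decaying like $s^n$ at $s = 0$.
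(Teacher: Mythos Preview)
Your proof is correct and follows essentially the same approach as the paper: transform the ODE via $s=\tfrac{\pi}{2}-\theta$, perform a Frobenius analysis at the regular singular point $s=0$ to obtain indicial roots $\pm n$, and then invoke the positivity and monotonicity of $L^*$ from Proposition~\ref{Prop:Properties_L_ball} to rule out the possibility that $\widetilde{L}^*$ is a multiple of the regular solution $s^n(1+\cdots)$. Your treatment is in fact slightly more thorough than the paper's, which stops at $b\neq 0$ and leaves the sign of $c_*$, the harmlessness of the possible logarithmic term, and the derivative asymptotic implicit; you make all three of these points explicit.
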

\begin{proof}
Let
\begin{align}
B(s)=-s\left[\frac{\sin(s)}{\cos(s)}-\frac{\cos(s)}{\sin(s)}\right] \quad \quad \andd \quad \quad C(s)=-s^2 \frac{n^2}{\sin^2(s)}.
\end{align}
Then, $B(0)=1$ and $C(0)=-n^2$. The indicial equation is
\begin{align}
0=r(r-1)+B(0)r+C(0)=(r-n)(r+n)
\end{align}
whose roots are $n$ and $-n$. By Theorem 2 in \cite[Section 7.3]{AD12}, we have two independent Frobenius solutions near $s=0$:
\begin{align}
&L_1(s)=s^n \sum_{j\geq 0} a_j s^j\\
&L_2(s)= \e L_1(s) \ln(s) + s^{-n} \sum_{j\geq 0} A_j s^j
\end{align}
where $a_0\neq 0$, $A_0\neq 0$ and $\e\in \{0,1\}$. Hence, near $s=0$,
\begin{align}
\widetilde{L}^*(s)=aL_1(s)+bL_2(s)
\end{align}
for some constants $a$ and $b$.

In view of Proposition \ref{Prop:Properties_L_ball}, $b\neq 0$. Indeed, if $b=0$, then
\begin{align}
0=\lim_{s\to 0^+} \widetilde{L}^*(s)=\lim_{\theta \to \frac{\pi}{2}} L^*(\theta)
\end{align}
obtaining a contradiction. Thus, the claim follows.
\end{proof}


\subsubsection{Behaviour of $\sigma_n$}
By Proposition \ref{Prop_Linearization_KS_OLD}
\begin{align}
D_\phi \Big|_{\phi \equiv \lambda} H(\phi)[w] &=\left(\frac{1}{2\lambda} \tan(\lambda) l'(1) - \frac{1}{2\cos^2(\lambda)} \right)w\\
&=: \sigma_n(\lambda) w.
\end{align}
Set $L(\theta)=\frac{L^*(\theta)}{L^*(\lambda)}$, so to have $L(\lambda)=1$ and $L(\theta)=l\left(\frac{\theta}{\lambda} \right)$. Hence
\begin{align}
L'(\theta)=\frac{1}{\lambda} l'\left(\frac{\theta}{\lambda}\right) \quad \andd \quad L'(\lambda)=\frac{1}{\lambda} l'(1).
\end{align}
Thus
\begin{align}
\sigma_n(\lambda)= \frac{1}{2} \tan(\lambda) \frac{(L^*)'(\lambda)}{L^*(\lambda)}-\frac{1}{2\cos^2(\lambda)}.
\end{align}
By Proposition \ref{Prop:Behav_L*_ball}, as $\lambda \to \frac{\pi}{2}$,
\begin{align}
\frac{(L^*)'(\lambda)}{L^*(\lambda)} &=\frac{-(\widetilde{L}^*)'\left(\frac{\pi}{2}-\lambda\right)}{\widetilde{L}^* \left( \frac{\pi}{2}-\lambda\right)}\\
&\sim \frac{n c_*\left(\frac{\pi}{2}-\lambda\right)^{-n-1}}{c_* \left(\frac{\pi}{2}-\lambda\right)^{-n}}\\
&=\frac{n}{\frac{\pi}{2}-\lambda}
\end{align}
and hence we conclude that if $n\ge2$ then
\begin{align}\label{Eq:Asymp_eigenv_infinity_ball}
\sigma_n(\lambda)\sim \frac{n-1}{2\cos^2(\lambda)}\to +\infty \quad \quad \textnormal{as}\ \lambda\to \frac{\pi}{2}.
\end{align}

On the other hand, consider the function $h(\theta):=\frac{\tan(\theta)}{\tan(\lambda)}$, which satisfies
\begin{align}
\left( \sin(2\theta) h'(\theta)\right)'=\frac{2}{\tan(\lambda)} \frac{1}{\cos^2(\theta)}.
\end{align}
We have that
\begin{align}
\sin(2\theta)\frac{n^2}{\cos^2(\theta)}h(\theta) = \frac{2}{\tan(\lambda)} n^2 \left(\frac{1}{\cos^2(\theta)}-1 \right)
\end{align}
and hence
\begin{align}
\left(\sin(2\theta) h'(\theta) \right)'- \sin(2\theta) \frac{n^2}{\cos^2(\theta)} h(\theta) &=\frac{2}{\tan(\lambda)} \left[\frac{1}{\cos^2(\theta)}(1-n^2) + n^2 \right]\\
& \geq \frac{2}{\tan(\lambda)} \left[\frac{1}{\cos^2(\lambda)}(1-n^2)+n^2 \right]\\
& = \frac{2}{\tan(\lambda)} \left[\frac{1-n^2\sin^2(\lambda)}{\cos^2(\lambda)}\right]
\end{align}
in $(0,\lambda)$. Then, it follows that, if $\lambda\le \arcsin\left(\frac1n\right)$ and $n \in \mathbb{N}$, then
\begin{align}
\left(\sin(2\theta) h'(\theta) \right)'- \sin(2\theta) \frac{n^2}{\cos^2(\theta)} h(\theta) \geq 0.
\end{align}
Hence  for such $\lambda$ the function $h$ is a subsolution to the differential equation of \eqref{Eq:L_ball} with boundary data
\begin{align}
h(0)=0 \quad \andd \quad h(\lambda)=1.
\end{align}
Thus, by comparison,
\begin{align}\label{Comparison_L_h}
L'(\lambda)< h'(\lambda)=\frac{1}{\tan(\lambda)} \frac{1}{\cos^2(\lambda)}
\end{align}
implying that
\begin{align}
\sigma_n(\lambda) < \frac{1}{2} \tan(\lambda) \frac{1}{\tan(\lambda) \cos^2(\lambda)}-\frac{1}{2 \cos^2(\lambda)}=0.
\end{align}
Namely, we notice that 
\begin{align}\label{a key inequality of the eigenvalue-1}
\mbox{ if }\ 0 < \lambda \le \arcsin\left(\frac 1n\right) \mbox{ and } n \in \mathbb{N}, \mbox{ then } \ \sigma_n(\lambda) < 0.
\end{align}


\begin{proposition}\label{Prop_Asymptotics_Sigma_1}
The following properties hold:
\begin{enumerate}
\item For every $\lambda \in \left(0,\frac{\pi}{2} \right)$
\begin{align}
0<\liminf_{n\to +\infty} \frac{\sigma_n(\lambda)}{n}\leq \limsup_{n\to +\infty} \frac{\sigma_n(\lambda)}{n}<+\infty.
\end{align}

\item For every $\lambda\in \left(0,\frac{\pi}{2} \right)$ and $i, j\in \mathbb{N}\cup\{0\}$, $i<j$, it holds $\sigma_i(\lambda)<\sigma_j(\lambda)$.

\item The eigenvalues curves $\lambda\mapsto \sigma_n(\lambda)$ have the following asymptotics:
	\begin{enumerate}
		\item $\lim_{\lambda\to 0^+} \sigma_n(\lambda)< 0$ for every $n\geq 1$;
		\item $\lim_{\lambda \to \frac{\pi}{2}^-} \sigma_n(\lambda)=+\infty$ for every $n\geq 2$.
	\end{enumerate}
	
\item For every $n\geq 2$ there exists a unique $\lambda_n \in \left( 0, \frac{\pi}{2}\right)$ such that $\sigma_n(\lambda_n)=0$. Moreover
	\begin{enumerate}
		\item $\sigma'_n(\lambda_n)>0$ for every $n\geq 2$;
		\item $\lambda_n<\lambda_k$ for every $n>k\geq 2$;
		\item $\lambda_n \xrightarrow[]{n\to +\infty}0$.
	\end{enumerate}

\end{enumerate}
\end{proposition}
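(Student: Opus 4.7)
My plan is to translate everything into the logarithmic derivative $q_n(\theta):=(L^*_n)'(\theta)/L^*_n(\theta)$, so that formula \eqref{Eq_sigma} reads
\begin{align*}
\sigma_n(\lambda)=\tfrac{1}{2}\tan(\lambda)\,q_n(\lambda)-\tfrac{1}{2\cos^2\lambda},
\end{align*}
and to control $q_n$ either via the divergence form of the ODE $(\sin(2\theta)(L^*_n)')'=\sin(2\theta)\tfrac{n^2}{\cos^2\theta}L^*_n$ or via the Riccati equation $q_n'+q_n^2+(\cot\theta-\tan\theta)q_n-\tfrac{n^2}{\cos^2\theta}=0$ with $q_n(0)=0$ (inherited from $(L^*_n)'(0)=0$).

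For (2), multiplying the divergence form for $L^*_n$ by $L^*_m$ ($m>n$), subtracting the symmetric identity and integrating on $(0,\lambda)$ yields, after noting that the boundary term at $\theta=0$ vanishes by regularity and the factor $\sin(2\theta)$,
\begin{align*}
\sin(2\lambda)\,L^*_n(\lambda)L^*_m(\lambda)\,[q_m(\lambda)-q_n(\lambda)]=(m^2-n^2)\int_0^\lambda\frac{\sin(2\theta)}{\cos^2\theta}L^*_nL^*_m\,d\theta>0,
\end{align*}
using $L^*_n,L^*_m>0$ from Proposition \ref{Prop:Properties_L_ball}; hence $\sigma_m(\lambda)>\sigma_n(\lambda)$. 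For (3), the asymptotics at $\lambda\to(\pi/2)^-$ is \eqref{Eq:Asymp_eigenv_infinity_ball}; at $\lambda\to 0^+$ the Frobenius expansion of Subsection \ref{Subsec:L*} together with \eqref{the second derivative at 0} gives $q_n(\lambda)=\tfrac{n^2}{2}\lambda+O(\lambda^3)$, whence $\sigma_n(\lambda)\to-\tfrac{1}{2}<0$.

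The heart of the proof is (1), where I bracket $q_n$ by WKB-type barriers. The supersolution $\overline{L}(\theta):=(\sec\theta+\tan\theta)^n$ has log-derivative $n/\cos\theta$ and satisfies $\overline{L}''+(\cot-\tan)\overline{L}'-\tfrac{n^2}{\cos^2}\overline{L}=n\overline{L}/\sin\theta\geq 0$; repeating the Sturm integration above gives
\begin{align*}
\sin(2\lambda)\,L^*_n(\lambda)\overline{L}(\lambda)\,[q_n(\lambda)-n/\cos\lambda]=-2n\int_0^\lambda\cos(s)\overline{L}(s)L^*_n(s)\,ds<0,
\end{align*}
so $q_n(\lambda)<n/\cos\lambda$. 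For the lower bound I take $\underline{L}(\theta):=\sec^n\theta$, with log-derivative $n\tan\theta$; direct computation yields $\underline{L}''+(\cot-\tan)\underline{L}'-\tfrac{n^2}{\cos^2}\underline{L}=n(2-n)\underline{L}\leq 0$ for $n\geq 2$, and the analogous integration gives $q_n(\lambda)\geq n\tan\lambda$. Combining,
\begin{align*}
\frac{\tan^2\lambda}{2}-\frac{1}{2n\cos^2\lambda}\leq\frac{\sigma_n(\lambda)}{n}<\frac{\sin\lambda}{2\cos^2\lambda}\qquad(n\geq 2),
\end{align*}
which implies (1).

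Existence of $\lambda_n$ in (4) is immediate from (3) and continuity. Differentiating $\sigma_n$, eliminating $q_n'$ via the Riccati equation and substituting the zero condition $q_n(\lambda_n)=1/(\sin\lambda_n\cos\lambda_n)$, algebra gives
\begin{align*}
\sigma'_n(\lambda_n)=\frac{n^2\sin^2\lambda_n-1}{2\sin\lambda_n\cos^3\lambda_n}.
\end{align*}
The key inequality \eqref{a key inequality of the eigenvalue-1} forces $\lambda_n>\arcsin(1/n)$, making this derivative strictly positive: this yields (a), and uniqueness follows because every zero of $\sigma_n$ is then transverse and positively oriented while $\sigma_n$ runs from $-\tfrac{1}{2}$ to $+\infty$. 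Item (b) holds since (2) gives $\sigma_{n+1}(\lambda_n)>\sigma_n(\lambda_n)=0$, forcing the unique zero $\lambda_{n+1}$ into $(0,\lambda_n)$; item (c) follows from the lower bound in (1), since for any fixed $\lambda>0$ we have $\sigma_n(\lambda)>0$ for $n$ large, hence $\lambda_n<\lambda$ eventually. I expect (1) to be the delicate step: both WKB barriers must simultaneously capture the correct $n$-scaling of $q_n$ and be compatible with the singular Riccati data at $\theta=0$, and notably the subsolution $\sec^n\theta$ is admissible only when $n\geq 2$, mirroring the regime in which the zeros $\lambda_n$ actually exist.
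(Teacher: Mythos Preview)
Your proof is correct and essentially parallel to the paper's, but the technical machinery differs in a way worth noting. Both arguments reduce everything to controlling the log-derivative $q_n=(L^*_n)'/L^*_n$ and then feed that control into \eqref{Eq_sigma}. The paper works directly with the Riccati equation for $q_n$ (their $f_n$): for the upper bound it posits $F_n=f_n-\tfrac{3n}{\cos\lambda}$ and derives a contradiction at a first sign change, and for the lower bound it compares with $g_n=n\tan\lambda$ via Gronwall; monotonicity in $n$ (part (2)) is likewise obtained by Gronwall on the difference $f_j-f_i$. You instead stay in the linear Sturm--Liouville picture and produce explicit comparison solutions $\overline L=(\sec\theta+\tan\theta)^n$ and $\underline L=\sec^n\theta$, then read off the inequality on $q_n$ from the Lagrange/Wronskian identity integrated on $(0,\lambda)$; the same device handles (2). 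Your upper barrier gives the sharper bound $q_n<n/\cos\lambda$ in place of the paper's $3n/\cos\lambda$, and your lower barrier reproduces exactly the paper's $q_n\geq n\tan\lambda$ (both arguments require $n\geq 2$ there, which is harmless for the statement). For (3)(a) the paper appeals to \eqref{a key inequality of the eigenvalue-1}, while you compute the explicit limit $-\tfrac12$ from the Frobenius data; for (4) the two arguments coincide, including the expression \eqref{sigma'_lambda_n} and the use of \eqref{a key inequality of the eigenvalue-1} to force $n\sin\lambda_n>1$.
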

\begin{proof}The proof is an adaptation of \cite[Proposition 3.1 and Lemma 3.3]{FMW}.
\begin{enumerate}
\item Consider the function $L^*:\left(0,\frac{\pi}{2}\right)\to \rr$ introduced in Subsection \ref{Subsec:L*}. 
As already observed, $l(t)=\frac{L^*(t\lambda)}{L^*(\lambda)}$. Let us define
\begin{align}
f_n(\lambda)=\frac{l'(1)}{\lambda}= \frac{(L^*)'(\lambda)}{L^*(\lambda)}
\end{align}
which satisfies
\begin{align}
f_n'(\lambda) &=\left[ \tan(\lambda)-\cot(\lambda)\right] f_n(\lambda) - f_n^2(\lambda) + \frac{n^2}{\cos^2(\lambda)}
\end{align}
with initial data $f_n(0)=0$. Fix $n\geq 1$ and set
\begin{align}
F_n(\lambda):=f_n(\lambda)-\frac{3n}{\cos(\lambda)},
\end{align}
which satisfies $F_n(0)=-3n<0$, and defined
\begin{align}
M:=\left\{\lambda \in \left(0,\frac{\pi}{2}\right)\ :\ F_n(\lambda)\geq 0\right\}.
\end{align}
Suppose by contradiction that $M\neq \emptyset$ and denote $\lambda^*:=\min M$. Since $F_n(0)<0$, it follows that $F_n(\lambda^*)=0$ and $F'_n(\lambda^*)\geq 0$. In particular $f_n(\lambda^*)=\frac{3n}{\cos(\lambda^*)}$. Then
\begin{align}
& 0 \leq F'_n(\lambda^*)\\
& = [ \tan(\lambda^*)-\cot(\lambda^*)] f_n(\lambda^*) - f_n^2(\lambda^*) + \frac{n^2}{\cos^2(\lambda^*)} \underbrace{- \frac{3n \sin(\lambda^*)}{\cos^2(\lambda^*)}}_{\leq 0}\\
&\leq [ \tan(\lambda^*) \underbrace{-\cot(\lambda^*)}_{\leq 0}] \frac{3n}{\cos(\lambda^*)} - \frac{9n^2}{\cos^2(\lambda^*)} + \frac{n^2}{\cos^2(\lambda^*)}\\
&\leq \frac{1}{\cos(\lambda^*)}\frac{3n}{\cos(\lambda^*)} - \frac{9n^2}{\cos^2(\lambda^*)} + \frac{n^2}{\cos^2(\lambda^*)}\\
&=\frac{n(3-8n)}{\cos^2(\lambda^*)}<0
\end{align}
obtaining a contradiction. Hence $M=\emptyset$, implying $\frac{l'(1)}{\lambda}<\frac{3n}{\cos(\lambda)}$ and so
\begin{align}\label{App:UpperSigma}
\frac{\sigma_n(\lambda)}{n} \leq \frac{\frac{3n}{2\cos(\lambda)} \tan(\lambda) - \frac{1}{2\cos^2(\lambda)}}{n}\le\frac{3 \tan(\lambda)}{2\cos(\lambda)} <+\infty.
\end{align}

For the other estimate define $g_n(\lambda)=n \tan(\lambda)$, which satisfies
\begin{align}
& g_n'(\lambda)-[\tan(\lambda)-\cot(\lambda)]g_n+g_n^2-\frac{n^2}{\cos^2(\lambda)}\\
&= 2n-n^2 \le 0
\end{align}
for $n \ge 2$ and $g_n(0)=0$. Hence, the function $h_n:=f_n-g_n$ satisfies $h_n(0)=0$ and
\begin{align}
h_n'(\lambda) \geq [\tan(\lambda)-\cot(\lambda)-f_n(\lambda)-g_n(\lambda)] h_n(\lambda)
\end{align}
implying, by Gronwall's inequality, that $h_n\geq 0$ and hence $f_n(\lambda)\geq n \tan(\lambda)$. As before, this implies
\begin{align}\label{App:LowerSigma}
\frac{\sigma_n(\lambda)}{n} \geq \frac{\frac{n}{2} \tan^2(\lambda) - \frac{1}{2\cos^2(\lambda)}}{n}\xrightarrow[]{n\to + \infty} \frac{\tan^2(\lambda)}{2}.
\end{align}

\item For $i<j$ define $h:=f_j-f_i$ which satisfies
\begin{align}
\begin{cases}
h'(\lambda)= [\tan(\lambda)-\cot(\lambda)-f_j(\lambda)-f_i(\lambda)]h(\lambda) + \frac{j^2-i^2}{\cos^2(\lambda)}\\
h(0)=0.
\end{cases}
\end{align}
Since $\frac{j^2-i^2}{\cos^2(\lambda)}>0$, it follows, again by Gronwall's inequality, $h>0$ and hence $f_j>f_i$. This implies $\sigma_j(\lambda)>\sigma_i(\lambda)$.

\item Both $(a)$ and $(b)$ follow from what seen in the present subsection.

\item The existence of $\lambda_n$ follows by point $(3)$. For the uniqueness, if $\lambda_n\in \left(0,\frac{\pi}{2}\right)$ is such that $\sigma_n(\lambda_n)=0$, then
\begin{align}\label{Eq:App_L'(lambda_0)}
L'(\lambda_n)=\frac{1}{\sin(\lambda_n)\cos(\lambda_n)}.
\end{align}
Using that $L'(\lambda)=\frac{(L^*)'(\lambda)}{L^*(\lambda)}$ and \eqref{Eq:L^*_ball}, one gets
\begin{align}\label{sigma'_lambda_n}
\sigma'_n(\lambda_n)=\frac{1}{2\cos^3(\lambda_n)\sin(\lambda_n)}\left(n^2 \sin^2(\lambda_n)-1 \right).
\end{align}
Then, since $\sigma_n(\lambda_n)=0$, the fact that $\sigma_n'(\lambda_n)>0$ follows from  \eqref{a key inequality of the eigenvalue-1}.
This proves both (a) and the uniqueness of $\lambda_n$. Point (b) follows from (2). For proving (c) we only have to notice that by \eqref{App:LowerSigma}
\begin{align}
0=\sigma_n(\lambda_n)\geq \frac{1}{2 \cos^2(\lambda_n)} \left(n \sin^2(\lambda_n)-1\right)
\end{align}
which implies $\lambda_n \in \left(0,\arcsin\left(\frac{1}{\sqrt{n}}\right)\right)$.
\end{enumerate}
\end{proof}


\subsection{Second case: $w=w(\eta)$} One can proceed similarly to the previous case, obtaining

\begin{proposition}\label{Prop_Asymptotics_Sigma_1_Second_Case}
The following properties hold:
\begin{enumerate}
\item For every $\lambda \in \left(0,\frac{\pi}{2} \right)$
\begin{align}
0<\liminf_{n\to +\infty} \frac{\sigma_n(\lambda)}{n}\leq \limsup_{n\to +\infty} \frac{\sigma_n(\lambda)}{n}<+\infty.
\end{align}

\item For every $\lambda\in \left(0,\frac{\pi}{2} \right)$ and $i,j\in \mathbb{N}\cup\{0\}$, $i<j$, it holds $\sigma_i(\lambda)<\sigma_j(\lambda)$.

\item The eigenvalues curves $\lambda\mapsto \sigma_n(\lambda)$ have the following asymptotics:
	\begin{enumerate}
		\item $\lim_{\lambda\to 0^+} \sigma_n(\lambda)> 0$ for every $n\geq 2$;
		\item $\lim_{\lambda \to \frac{\pi}{2}^-} \sigma_n(\lambda)=-\infty$ for every $n\geq 1$.
	\end{enumerate}
	
\item For every $n\geq 2$ there exists a unique $\lambda_n \in \left( 0, \frac{\pi}{2}\right)$ such that $\sigma_n(\lambda_n)=0$. Moreover
	\begin{enumerate}
		\item $\sigma'_n(\lambda_n)<0$ for every $n\geq 2$;
		\item $\lambda_n>\lambda_k$ for every $n>k\geq 2$;
		\item $\lambda_n \xrightarrow[]{n\to +\infty}\frac{\pi}{2}$.
	\end{enumerate}

\end{enumerate}
\end{proposition}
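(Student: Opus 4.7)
The strategy is to mirror the analysis of Section \ref{Sec:EigenvaluesProperties}, replacing the radial ODE \eqref{Eq:L^*_ball} by its formal $\cos\leftrightarrow\sin$ analog
\begin{align}
(L^*)''(\theta) + \Big[\frac{\cos\theta}{\sin\theta} - \frac{\sin\theta}{\cos\theta}\Big](L^*)'(\theta) - \frac{n^2}{\sin^2\theta}L^*(\theta) = 0, \qquad L^*(\pi/4)=1,
\end{align}
together with regularity at $\theta = 0$; here and below $L^*$ denotes the $\eta$-analog of the function of Subsection \ref{Subsec:L*}. The indicial equation at $\theta = 0$ is now $r^2 - n^2 = 0$, so $L^*(\theta) \sim a_*\theta^n$ near $0$ for some $a_* > 0$. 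The analog of Proposition \ref{Prop:Properties_L_ball}, namely $L^* > 0$ and $(L^*)' > 0$ on $(0,\pi/2)$, follows by the same supersolution argument applied to the integrated form $(\sin(2\theta)(L^*)')' = (2n^2\cos\theta/\sin\theta)L^*$.

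\textbf{Behavior at $\pi/2$ and point (3).} Under $s = \pi/2 - \theta$ the present ODE becomes the $\xi$-ODE \eqref{Eq:L^*_ball} in the variable $s$, whose indicial equation $r^2 = 0$ at $s = 0$ yields Frobenius solutions $L_1,\, L_2 = L_1\ln s + \ldots$ as in Subsection \ref{Subsec:L*}. The function $L^*(\pi/2-s)$ cannot be proportional to the $L^*$ of Subsection \ref{Subsec:L*} (which equals $C^* L_1$), because the latter blows up at $s = \pi/2$ while $L^*(\pi/2-s) \sim a_*(\pi/2-s)^n$ vanishes there; hence its $L_2$-component is nonzero, and
\begin{align}
L^*(\theta) \sim -c_*\ln(\pi/2-\theta), \qquad (L^*)'(\theta) \sim \frac{c_*}{\pi/2-\theta}
\end{align}
as $\theta \to \pi/2^-$, for some $c_* > 0$. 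Plugging into $\sigma_n(\lambda) = \frac{\tan\lambda}{2}\frac{(L^*)'(\lambda)}{L^*(\lambda)} - \frac{1}{2\cos^2\lambda}$: near $0$, $(L^*)'/L^* \sim n/\lambda$ gives $\sigma_n \to (n-1)/2 > 0$ for $n \geq 2$; near $\pi/2$, the term $\frac{\tan\lambda}{2}(L^*)'/L^* \sim 1/[2(\pi/2-\lambda)^2 |\ln(\pi/2-\lambda)|]$ is dominated by $1/(2\cos^2\lambda) \sim 1/[2(\pi/2-\lambda)^2]$, so $\sigma_n \to -\infty$ for all $n \geq 1$. Points (1) and (2) then follow from Gronwall/Riccati estimates on $f_n := (L^*)'/L^*$, which satisfies $f_n'(\lambda) = [\tan\lambda - \cot\lambda]f_n(\lambda) - f_n^2(\lambda) + n^2/\sin^2\lambda$, along the lines of Proposition \ref{Prop_Asymptotics_Sigma_1}(1)-(2) with barriers adapted to the new source term; the only subtlety is the singular initial datum $f_n \sim n/\lambda$ at $0$ (as opposed to $f_n(0) = 0$), which is handled by initializing the comparison at an interior point of $(0,\pi/2)$.

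\textbf{Point (4).} Existence of $\lambda_n$ is immediate from (3) and the intermediate value theorem. A direct computation, identical to the first case since at a zero one has $f_n(\lambda_n) = 1/(\sin\lambda_n\cos\lambda_n)$, yields
\begin{align}
\sigma_n'(\lambda_n) = \frac{n^2\cos^2\lambda_n - 1}{2\sin\lambda_n\cos^3\lambda_n},
\end{align}
which is strictly negative precisely when $\lambda_n > \arccos(1/n)$. The counterpart of \eqref{a key inequality of the eigenvalue-1} is therefore
\begin{align}
\sigma_n(\lambda) > 0 \quad \text{for all } \lambda \in (0, \arccos(1/n)] \text{ and every } n \in \mathbb{N},
\end{align}
which is proved by verifying that the same comparison $h(\theta) = \tan\theta/\tan\lambda$ satisfies
\begin{align}
(\sin(2\theta)h'(\theta))' - \sin(2\theta)\frac{n^2}{\sin^2\theta}h(\theta) = \frac{2}{\tan\lambda}\cdot\frac{1 - n^2\cos^2\theta}{\cos^2\theta} \leq 0
\end{align}
on $(0,\lambda)$ whenever $\lambda \leq \arccos(1/n)$; the maximum principle and Hopf's lemma applied to $u := L^*/L^*(\lambda) - h$ (with $u(0) = u(\lambda) = 0$) yield $u \leq 0$ and $u'(\lambda) > 0$, i.e. $f_n(\lambda) > 1/(\sin\lambda\cos\lambda)$, hence $\sigma_n(\lambda) > 0$. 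Uniqueness of $\lambda_n$ then follows from the monotonicity of $n^2\cos^2\lambda$ in $\lambda$: two zeros would both carry $\sigma_n' < 0$, which is incompatible with the upward sign change of $\sigma_n$ required between them. Parts (b) and (c) follow from point (2) and from the bound $\lambda_n > \arccos(1/n) \to \pi/2$ as $n \to \infty$.

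\textbf{Main obstacle.} The most subtle step is establishing the rate $\sigma_n \to -\infty$ at $\pi/2$. This rests on the nonvanishing of the logarithmic coefficient in the Frobenius expansion of $L^*$ at $\theta = \pi/2$, which cannot be detected from local data at $\pi/2$ alone: it is forced by the global condition $L^*(0) = 0$.
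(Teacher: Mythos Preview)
Your proposal is essentially correct and follows the same route as the paper (Appendix~\ref{Sec:AppB}): set up the $\eta$-ODE, extract the Frobenius behaviour at both endpoints, prove the key inequality $\sigma_n(\lambda)>0$ for $\lambda\le\arccos(1/n)$ via the comparison $h(\theta)=\tan\theta/\tan\lambda$, and deduce (4) from the explicit formula for $\sigma_n'(\lambda_n)$. Two comments on where your argument diverges from the paper and where it is thin.

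\medskip
\textbf{Logarithmic behaviour at $\pi/2$.} Your argument that the $L_2$-component of $\widetilde L^*$ is nonzero---by identifying the transformed equation with the $\xi$-ODE and invoking Proposition~\ref{Prop:Behav_L*_ball} to see that the regular solution blows up at the far endpoint while $\widetilde L^*$ vanishes there---is correct and rather elegant. The paper instead argues directly that $\sin(2\theta)(L^*)'$ is increasing and positive, hence $(L^*)'\to+\infty$ as $\theta\to\pi/2$, which is incompatible with a pure $L_1$-expansion. Both work; yours exploits the $\theta\leftrightarrow\pi/2-\theta$ symmetry more explicitly.

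\medskip
\textbf{Points (1)--(2): the singular initial datum.} This is where your sketch is genuinely underspecified. You propose to run Gronwall/Riccati comparisons on $f_n=(L^*)'/L^*$, noting that $f_n\sim n/\lambda$ at $0$ and saying the singularity ``is handled by initializing the comparison at an interior point.'' That does not directly yield $n$-uniform bounds: to start a barrier argument at some fixed $\lambda_0\in(0,\pi/2)$ you would already need control of $f_n(\lambda_0)$ uniformly in $n$, which is precisely what you are trying to prove. The paper's device is to work instead with $k_n(\lambda):=\lambda f_n(\lambda)=l'(1)$, which satisfies a Riccati equation with \emph{finite} initial value $k_n(0)=n$; the barriers $3n/\cos\lambda$ (upper) and $n\lambda\cot\lambda$ (lower) then go through exactly as in the first case, and (2) follows from Gronwall applied to $k_j-k_i$. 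This regularisation is the one nontrivial adaptation needed in the $\eta$-case, and your sketch misses it.
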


\noindent For the complete proof see Appendix \ref{Sec:AppB}.



\section{Bifurcation argument}\label{Sec:Bifurcation}
We introduce two spaces
\begin{align}
S_1:=\{ \phi \in C^{1,\alpha}(\sss^1_\xi )\, :\, \phi \mbox{ is even in }\xi \}\ \mbox{ and }\  S_2:=\{ \phi \in C^{2,\alpha}(\sss^1_\xi )\, :\, \phi \mbox{ is even in }\xi \}.
\end{align}
We denote
\begin{itemize}
\item $\mathcal{F}:=\{\cos(m\xi)\}_{m\geq 0}$ and hence we have
\begin{equation}
S_1=\overline{\textnormal{Span}_{\rr}\mathcal{F}}^{||\cdot||_{C^{1,\alpha}}}\ \mbox{ and }\ 
S_2=\overline{\textnormal{Span}_{\rr}\mathcal{F}}^{||\cdot||_{C^{2,\alpha}}};
\end{equation}

\item for every $k\in \mathbb{N}\cup\{0\}$
\begin{align}
\mathcal{H}_k:=\overline{\textnormal{Span}_{\rr}\mathcal{F}}^{||\cdot||_{H^k}};
\end{align}

\item $\mathcal{L}^2:=\mathcal{H}_0$ with inner scalar product
\begin{align}
\scal{u,v}:=\int_{\sss^1\times \sss^1} uv \ \textnormal{da};
\end{align}

\item for every $j\in \mathbb{N}\cup\{0\}$
\begin{align}
V^j:=\textnormal{Span}_\rr \{\cos(j\xi)\}\subset \cap_k \mathcal{H}_k;
\end{align}

\item for every $j\in \mathbb{N}\cup\{0\}$
\begin{align}
P_j:\mathcal{L}^2\to \mathcal{L}^2 \quad \textnormal{the } \mathcal{L}^2\textnormal{-orthogonal projection on } V^j;
\end{align}

\item for every $j,k\in \mathbb{N}\cup\{0\}$
\begin{align}
W_k^j:=(V^j)^{\bot_{\mathcal{L}^2|_{\mathcal{H}_k}}}:=\{v\in \mathcal{H}_k\ :\ P_j v=0\},
\end{align}
the $\mathcal{L}^2$-orthogonal complement of $V^j$ in $\mathcal{H}_k$.
\end{itemize}


\begin{remark}
In the present section, we are going to prove Theorem \ref{Thm:Main} for the case $w=w(\xi)$, i.e. constructing a family of non-isoparametric Serrin domains with small volumes (i.e. $\lambda_j\xrightarrow[]{j\to +\infty} 0$). For the case $w=w(\eta)$, which provides a family of non-isoparametric Serrin domains with volume close to $|\sss^3|$ (i.e. $\lambda_j\xrightarrow[]{j\to +\infty} \frac{\pi}{2}$), one just has to retrace, step by step, what we are going to do by replacing previous definitions with the following ones
\begin{align}
\mathcal{F}:=\{\cos(m\eta)\}_{m\geq 0} \quad \quad \andd \quad \quad V^j:=\textnormal{Span}_\rr \{\cos(j\eta)\},
\end{align}
hence modifying $S_1,S_2,\mathcal{H}_k,\mathcal{L}^2,P_j$ and $W^j_k$ in accordance with these one, and using Proposition \ref{Prop_Asymptotics_Sigma_1_Second_Case} instead of Proposition \ref{Prop_Asymptotics_Sigma_1}.
\end{remark}


\subsection{Functional properties}
\noindent The next proposition is needed to ensure that the operator $\mathbb{L}_\lambda$ can be extended to the functional space $\mathcal{H}_2$. It is obtained as in \cite[Proposition 4.1]{FMW}.


\begin{proposition}\label{Prop_Prop4.1_m=n}
For any fixed $\lambda \in \left(0, \frac{\pi}{2}\right)$, the linear map
\begin{align}\label{LS_Prop_Prop4.1_m=n}
\mathbb{L}_\lambda^{S_2}:=\mathbb{L}_\lambda\Big|_{S_2}:S_2\to S_1,
\end{align}
where $\mathbb{L}_\lambda$ is defined as \eqref{Eq_Prop_Linearization_KS_OLD}, extends to a continuous linear map
\begin{align}\label{Extension_Prop_Prop4.1_m=n}
\mathbb{L}^{S_2}_\lambda:\mathcal{H}_2 & \to \mathcal{H}_1\\
w &\mapsto \sum_{j\in \mathbb{N}\cup\{0\}} \sigma_j(\lambda) P_j w.
\end{align}
Moreover, for any $j\in \mathbb{N}\cup\{0\}$ the following operator is an isomorphism
\begin{align}\label{Isom_Prop_Prop4.1_m=n}
\mathbb{L}^{S_2}_\lambda-\sigma_j(\lambda):W_2^j\to W_1^j.
\end{align}
\end{proposition}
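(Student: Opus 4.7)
\medskip

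\textbf{Proof plan.} The statement is really a statement about a diagonal operator in a Fourier (Sobolev) basis, together with a quantitative bound on the growth of its eigenvalues. The key structural input is that on $S_2$ each basis function $\cos(m\xi)$ is an eigenfunction of $\mathbb{L}_\lambda$ with eigenvalue $\sigma_m(\lambda)$ (this is exactly what was computed after Proposition \ref{Prop_Linearization_KS_OLD}), while the crucial quantitative inputs are Proposition \ref{Prop_Asymptotics_Sigma_1}\,(1) (the eigenvalues grow at most like $n$) and Proposition \ref{Prop_Asymptotics_Sigma_1}\,(2) (strict monotonicity, so in particular simplicity of the spectrum once restricted to $S_2$).

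First I would establish the continuous extension \eqref{Extension_Prop_Prop4.1_m=n}. The family $\mathcal{F}=\{\cos(m\xi)\}_{m\geq0}$ is a complete orthogonal system in $\mathcal{L}^2$ after fixing the $\eta$-integration as a harmless constant factor; writing any $w\in \mathcal{H}_2$ as $w=\sum_{m\geq0}a_m\cos(m\xi)$ and using that $\|\cos(m\xi)\|_{H^k}^2\sim (1+m^2)^k$, one has $\|w\|_{\mathcal{H}_2}^2\sim\sum_m(1+m^2)^2|a_m|^2$. Defining the extension by the formula $\mathbb{L}^{S_2}_\lambda w:=\sum_m\sigma_m(\lambda)a_m\cos(m\xi)$ and invoking Proposition \ref{Prop_Asymptotics_Sigma_1}\,(1), which gives $|\sigma_m(\lambda)|\leq C(\lambda)\,(1+m)$ for every $m$, yields
\begin{align}
\|\mathbb{L}^{S_2}_\lambda w\|_{\mathcal{H}_1}^2\sim\sum_{m}(1+m^2)|\sigma_m(\lambda)|^2|a_m|^2\leq C(\lambda)^2\sum_{m}(1+m^2)^2|a_m|^2\sim C(\lambda)^2\|w\|_{\mathcal{H}_2}^2,
\end{align}
proving continuity. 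Density of $S_2$ in $\mathcal{H}_2$ together with continuity of the original map $\mathbb{L}_\lambda^{S_2}:S_2\to S_1\hookrightarrow \mathcal{H}_1$ then shows that this diagonal formula is genuinely the unique continuous extension.

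For the isomorphism statement, on $W_2^j=\{w\in\mathcal{H}_2\,:\,a_j=0\}$ the operator $\mathbb{L}^{S_2}_\lambda-\sigma_j(\lambda)$ acts diagonally as $a_m\mapsto(\sigma_m(\lambda)-\sigma_j(\lambda))a_m$ for $m\neq j$. The candidate inverse is $R_j:W_1^j\to W_2^j$, $\sum_{m\neq j}b_m\cos(m\xi)\mapsto\sum_{m\neq j}(\sigma_m(\lambda)-\sigma_j(\lambda))^{-1}b_m\cos(m\xi)$. To make this bounded from $\mathcal{H}_1$ to $\mathcal{H}_2$ I need a uniform lower bound $|\sigma_m(\lambda)-\sigma_j(\lambda)|\geq c(\lambda,j)\,(1+m)$ for $m\neq j$. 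This is exactly where the asymptotics of Proposition \ref{Prop_Asymptotics_Sigma_1}\,(1) enter again: from the liminf part, $\sigma_m(\lambda)\geq c_1(\lambda)m$ for large $m$, so $\sigma_m(\lambda)-\sigma_j(\lambda)\geq c_1(\lambda)m/2$ once $m$ is sufficiently large compared to $j$; for the remaining finite set $\{m\neq j\,:\,m\leq M(\lambda,j)\}$, the strict monotonicity of Proposition \ref{Prop_Asymptotics_Sigma_1}\,(2) guarantees $\sigma_m(\lambda)\neq\sigma_j(\lambda)$, and a finite set of nonzero numbers is bounded away from zero. Combining the two regimes gives the desired uniform bound, hence $R_j$ is continuous. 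Since $R_j$ is a two-sided inverse of $\mathbb{L}^{S_2}_\lambda-\sigma_j(\lambda)$ by construction, \eqref{Isom_Prop_Prop4.1_m=n} follows.

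The only delicate point I expect is matching norms cleanly in the first step (the Sobolev norm of a function of $\xi$ alone, regarded as a function on $\sss^1\times\sss^1$, and the precise weights $(1+m^2)^k$); once the Fourier-Sobolev identification is set up as in \cite{FMW}, everything reduces to the spectral asymptotics already proved in Section \ref{Sec:EigenvaluesProperties}. No new analytic idea is required beyond those in \cite[Proposition 4.1]{FMW}.
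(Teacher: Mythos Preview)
Your proposal is correct and follows essentially the same approach as the paper: both arguments identify $\mathcal{H}_k$ with a weighted $\ell^2$ space via the Fourier basis $\mathcal{F}$, use the upper bound in Proposition \ref{Prop_Asymptotics_Sigma_1}\,(1) to prove continuity of the extension, and construct the explicit diagonal inverse $R_j$ whose boundedness follows from the lower bound in Proposition \ref{Prop_Asymptotics_Sigma_1}\,(1). If anything, your treatment of the inverse is slightly more explicit than the paper's, since you spell out the splitting into the asymptotic regime (handled by the liminf) and the finite residual set (handled by the strict monotonicity of part (2)), whereas the paper absorbs both into a single constant $E_2$ without comment.
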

\begin{proof}
Observe that $\mathcal{H}_k$ can be characterized as the subspace of functions $v\in \mathcal{L}^2$ such that
\begin{align}
\sum_{j\in \mathbb{N}\cup\{0\}} (1+j^2)^k \scal{P_j v, P_j v}<+\infty.
\end{align}
Hence, fixed $w\in \mathcal{H}_2$, one has
\begin{align}
\sum_{j\in \mathbb{N}\cup\{0\}} (1+j^2)^2 \scal{P_j w, P_j w}<+\infty
\end{align}
and so, by $(1)$ of Proposition \ref{Prop_Asymptotics_Sigma_1},
\begin{align}
\sum_{j\in \mathbb{N}\cup\{0\}} (1+j^2) & \scal{P_j \mathbb{L}^{S_2}_\lambda w, P_j \mathbb{L}^{S_2}_\lambda w}\\
&=\sum_{j\in \mathbb{N}\cup\{0\}} (1+j^2) \sigma_j^2(\lambda)\scal{P_j w, P_j w }\\
&\le \sum_{j\in \mathbb{N}\cup\{0\}} (1+j^2) E_1^2(1+ j^2) \scal{P_j w, P_j w }\\
&\le \sum_{j\in \mathbb{N}\cup\{0\}} (1+j^2)^2 E_1^2 \scal{P_j w, P_j w }<+\infty
\end{align}
where $E_1$ is a positive constant,
implying $\mathbb{L}^{S_2}_\lambda w \in \mathcal{H}_1$. Hence, \eqref{Extension_Prop_Prop4.1_m=n} defines a continuous linear operator. Since on any finite linear combination of elements of $\mathcal{F}$ this operator coincides with \eqref{LS_Prop_Prop4.1_m=n} and since $\mathcal{F}$ is dense dense both in $S_2$ and $\mathcal{H}_2$, the claimed extension follows by continuity.

Clearly, $\mathbb{L}^{S_2}_\lambda$ maps $W_2^j$ into $W_1^j$. Moreover the lower estimate in $(1)$ of Proposition \ref{Prop_Asymptotics_Sigma_1} implies that the operator
\begin{align}\label{App1_Prop_Prop4.1_m=n}
R_j^\lambda: W_1^j & \to W_2^j\\
w & \mapsto \sum_{m\neq j} \frac{1}{\sigma_m(\lambda)-\sigma_j(\lambda)} P_m w
\end{align}
is well defined and continuous. Indeed, for any $w\in W_1^j$
\begin{align}
P_j \sum_{m\neq j} \frac{1}{\sigma_m(\lambda)-\sigma_j(\lambda)} P_m w &=\sum_{m\neq j} \frac{1}{\sigma_m(\lambda)-\sigma_j(\lambda)} P_j P_m w=0
\end{align}
and, by the lower estimate in $(1)$ of Proposition \ref{Prop_Asymptotics_Sigma_1},
\begin{align}
\sum_{t\in \mathbb{N}\cup\{0\}} (1+t^2)^2 &\scal{P_t \sum_{m\neq j} \frac{1}{\sigma_m(\lambda)-\sigma_j(\lambda)} P_m w, P_t \sum_{m\neq j} \frac{1}{\sigma_m(\lambda)-\sigma_j(\lambda)} P_m w}\\
&=\sum_{t\neq j} (1+t^2)^2 \left(\frac{1}{\sigma_t(\lambda)-\sigma_j(\lambda)}\right)^2 \scal{P_t w, P_t w}\\
&\le\sum_{t\neq j} (1+t^2)^2 \frac{1}{E_2^2(1+t^2)} \scal{P_t w, P_t w}\\
&=\frac 1{E_2^2}\sum_{t\neq j} (1+t^2) \scal{P_t w, P_t w} <+\infty, 
\end{align}
since $w\in \mathcal{H}_1$, where $E_2$ is a positive constant. By a direct computation, the inverse of $R_j^\lambda$ is $\mathbb{L}^{S_2}_\lambda-\sigma_j(\lambda)$: for $w\in W_2^j$
\begin{align}
R_j^\lambda & ((\mathbb{L}^{S_2}_\lambda-\sigma_j(\lambda))(w))=\sum_{m\neq j} \frac{1}{\sigma_m(\lambda)-\sigma_j(\lambda)} P_m ((\mathbb{L}^{S_2}_\lambda-\sigma_j(\lambda))(w))\\
&=\sum_{m\neq j} \frac{1}{\sigma_m(\lambda)-\sigma_j(\lambda)} P_m \sum_{t\in \mathbb{N}\cup\{0\}} \sigma_t(\lambda) P_t w - \sum_{m\neq j} \frac{1}{\sigma_m(\lambda)-\sigma_j(\lambda)} P_m \sigma_j(\lambda) w\\
&=\sum_{m\neq j} \frac{\sigma_m(\lambda)}{\sigma_m(\lambda)-\sigma_j(\lambda)} P_m w - \sum_{m\neq j} \frac{\sigma_j(\lambda)}{\sigma_m(\lambda)-\sigma_j(\lambda)} P_m w\\
&=w;
\end{align}
while for $w\in W_1^j$
\begin{align}
(\mathbb{L}^{S_2}_\lambda- & \sigma_j(\lambda))R_j^\lambda w =(\mathbb{L}^{S_2}_\lambda-\sigma_j(\lambda)) \sum_{m\neq j} \frac{1}{\sigma_m(\lambda)-\sigma_j(\lambda)} P_m w\\
&=\sum_{t\in \mathbb{N}\cup\{0\}} \sigma_t(\lambda) P_t \sum_{m\neq j} \frac{1}{\sigma_m(\lambda)-\sigma_j(\lambda)} P_m w \\
&\qquad- \sigma_j(\lambda)\sum_{m\neq j} \frac{1}{\sigma_m(\lambda)-\sigma_j(\lambda)} P_m w\\
&=\sum_{m\neq j} \frac{\sigma_m(\lambda)}{\sigma_m(\lambda)-\sigma_j(\lambda)} P_m w - \sum_{m\neq j} \frac{\sigma_j(\lambda)}{\sigma_m(\lambda)-\sigma_j(\lambda)} P_m w\\
&=w.
\end{align}
We get the claim.
\end{proof}


\begin{remark}
Let $w\in \mathcal{H}_2$ and let $\varphi^\lambda\in W^{1,2}(\Omega)$ be the unique weak solution to \eqref{Eq_HarmonicExtension}, which only depends on $t$ and $\xi$. Moreover, if $w\in S_2$ the function $\varphi^\lambda$ is even with respect to the variable $\xi$. Hence, for any fixed $t$ we have $\varphi^\lambda(t,\cdot)\in \mathcal{H}_2$. By elliptic regularity, $\varphi^\lambda \in W^{2,2}(\Omega)$ and, as seen in \cite[Remark 4.2]{FMW}, the above extension $\mathbb{L}^{S_2}_\lambda$ can be characterized as the operator
\begin{align}
\mathbb{L}_\lambda [w](\xi)=\frac{\tan(\lambda)}{2\lambda} \pder{\varphi^\lambda}{t}(1,\xi) - \frac{1}{2} \frac{1}{\cos^2(\lambda)} w(\xi)
\end{align}
where $\pder{\varphi^\lambda}{t}$ is considered in the sense of traces.
\end{remark}

\subsection{Proof of Theorem \ref{Thm:Main}}
Now consider the following space
\begin{align}
\mathcal{O}:=\left\{(\lambda,\phi)\in \left(0,\frac{\pi}{2}\right)\times S_2\ :\ -\lambda<\phi< \frac{\pi}{2}-\lambda\right\}.
\end{align}
Let
\begin{align}
G:\mathcal{O}&\to S_1\\
(\lambda, \phi)&\mapsto H(\lambda+\phi)-H(\lambda),
\end{align}
where Lemma \ref{Lem_evenness} guarantees that $G$ is well-defined.
In particular
\begin{align}
G(\lambda,\phi)&=H(\lambda+\phi)-\pder{\widetilde{v}}{\theta}\left( \lambda\right)\\
&=H(\lambda+\phi)+\frac{1}{2}\tan(\lambda)
\end{align}
implying $G(\lambda,0)=0$ and
\begin{align}
\left(D_\phi \Big|_{\phi=0} G\right) (\lambda,0)=\mathbb{L}_\lambda^{S_2}\in \mathcal{L}(S_2,S_1).
\end{align}

\noindent Similarly to \cite[Proposition 5.1]{FMW}, we can prove that

\begin{proposition}\label{Prop_Prop5.1_m=n}
Consider $\lambda_j$, with $j\geq 2$, as given in Proposition \ref{Prop_Asymptotics_Sigma_1}, point $(4)$. For every $j\geq 2$ the linear operator
\begin{align}
\mathbb{L}_j:=\mathbb{L}_{\lambda_j}^{S_2} \in \mathcal{L}(S_{2},S_1)
\end{align}
has the following properties
\begin{enumerate}
\item the kernel $\textnormal{ker}(\mathbb{L}_j)$ of $\mathbb{L}_j$ is spanned by $w_j(\xi)=\cos(j\xi)$;
\item the range $\textnormal{Im}(\mathbb{L}_j)$ of $\mathbb{L}_j$ is given by
\begin{align}
\textnormal{Im}(\mathbb{L}_j)=\left\{v\in S_1\ :\ \scal{v,w_j}=0\right\}.
\end{align}
\end{enumerate}
Moreover,
\begin{align}
\frac{\partial}{\partial \lambda}\Big|_{\lambda=\lambda_j} \mathbb{L}_j w_j \notin \textnormal{Im}(\mathbb{L}_j).
\end{align}
\end{proposition}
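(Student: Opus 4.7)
The three claims all rest on the diagonal representation $\mathbb{L}_j w = \sum_{m\geq 0}\sigma_m(\lambda_j)\,P_m w$ supplied by Proposition \ref{Prop_Prop4.1_m=n}, combined with the spectral picture of Proposition \ref{Prop_Asymptotics_Sigma_1}. My plan is to handle the kernel and transversality claims directly from this spectral data, and to treat the image claim as a Fredholm/regularity issue for a first-order Dirichlet-to-Neumann-type operator.

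\textbf{Kernel.} I would first check that $\sigma_m(\lambda_j)=0$ if and only if $m=j$. The mode $m=0$ forces $L^{\ast}\equiv\textrm{const}$ (from $(\sin(2\theta)L^{\ast\prime})'=0$ together with regularity at $\theta=0$), so $\sigma_0(\lambda)=-\frac{1}{2\cos^2(\lambda)}<0$ for every $\lambda$. For $m=1$, the inequality \eqref{a key inequality of the eigenvalue-1} holds on the whole $(0,\arcsin 1)=\left(0,\frac{\pi}{2}\right)$, giving $\sigma_1<0$ everywhere. For $m\geq 2$, Proposition \ref{Prop_Asymptotics_Sigma_1}(4) tells us $\sigma_m$ has a unique zero $\lambda_m$, and by (4)(b) the sequence $\{\lambda_m\}_{m\geq 2}$ is strictly decreasing, hence the $\lambda_m$ are pairwise distinct. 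Consequently $\sigma_m(\lambda_j)=0$ forces $\lambda_m=\lambda_j$, i.e.\ $m=j$. Therefore $\ker(\mathbb{L}_j)=V^j=\textnormal{Span}_{\rr}\{w_j\}$.

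\textbf{Image.} The inclusion $\textnormal{Im}(\mathbb{L}_j)\subseteq\{v\in S_1:\scal{v,w_j}=0\}$ is immediate from the diagonal form, since $\sigma_j(\lambda_j)=0$ annihilates the $V^j$-component. For the reverse inclusion, given $v\in S_1$ with $\scal{v,w_j}=0$, I would expand $v=\sum_{m\neq j}c_m\cos(m\xi)$ and set
\begin{align}
\phi:=\sum_{m\neq j}\frac{c_m}{\sigma_m(\lambda_j)}\cos(m\xi),
\end{align}
which, by the isomorphism statement of Proposition \ref{Prop_Prop4.1_m=n} applied with shift $\sigma_j(\lambda_j)=0$, lies in $W_2^j\subset\mathcal{H}_2$ and satisfies $\mathbb{L}_j\phi=v$. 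The delicate step is to upgrade the regularity from $\mathcal{H}_2$ to the Hölder class $S_2=C^{2,\alpha}$. Here I would exploit the structure of $\mathbb{L}_{\lambda_j}$ as a Dirichlet-to-Neumann-type operator built from the harmonic extension $\varphi^{\lambda_j}$ of \eqref{Eq_HarmonicExtension_KS_OLD}: the asymptotic $\sigma_m(\lambda_j)\asymp m$ from Proposition \ref{Prop_Asymptotics_Sigma_1}(1) reflects its being a first-order elliptic pseudodifferential operator on $\sss^1$, so that inversion modulo kernel gains exactly one derivative in Hölder scales. Concretely, Schauder estimates for \eqref{Eq_HarmonicExtension_KS_OLD} together with the explicit formula for $\mathbb{L}_\lambda$ allow one to bootstrap $\phi\in\mathcal{H}_2$ up to $\phi\in C^{2,\alpha}$, mirroring the argument of \cite[Prop.~5.1]{FMW}.

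\textbf{Transversality.} Since $w_j$ is an eigenfunction of $\mathbb{L}_\lambda^{S_2}$ for every $\lambda$ in a neighbourhood of $\lambda_j$, with eigenvalue $\sigma_j(\lambda)$, differentiating in $\lambda$ gives
\begin{align}
\left.\frac{\partial}{\partial\lambda}\right|_{\lambda=\lambda_j}\mathbb{L}_\lambda^{S_2}\,w_j=\sigma_j'(\lambda_j)\,w_j.
\end{align}
By Proposition \ref{Prop_Asymptotics_Sigma_1}(4)(a), $\sigma_j'(\lambda_j)>0$, so this is a nonzero scalar multiple of $w_j$; but $w_j$ spans the $\mathcal{L}^2$-complement of $\textnormal{Im}(\mathbb{L}_j)=\{w_j\}^\perp\cap S_1$, so in particular the derivative does not lie in the image. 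The main obstacle I anticipate is precisely the Sobolev-to-Hölder upgrade in the image step, since the relevant Dirichlet-to-Neumann operator has coefficients that degenerate at the singular fibre $\Sigma_0$; all the remaining assertions are essentially algebraic consequences of the eigenvalue analysis already established in Sections \ref{Sec:NotationPreliminaries} and \ref{Sec:EigenvaluesProperties}.
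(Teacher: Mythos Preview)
Your proposal is correct and follows essentially the same route as the paper. The image and transversality steps are identical to the paper's (invoke the $W_2^j\to W_1^j$ isomorphism of Proposition \ref{Prop_Prop4.1_m=n} with shift $\sigma_j(\lambda_j)=0$, then bootstrap via Schauder regularity for the harmonic extension; compute $\partial_\lambda\mathbb{L}_\lambda w_j=\sigma_j'(\lambda_j)w_j$ and use $\sigma_j'(\lambda_j)>0$); for the kernel you argue mode-by-mode that $\sigma_m(\lambda_j)\neq 0$ for $m\neq j$, whereas the paper obtains this in one stroke from the same isomorphism (equivalently, from the strict monotonicity $\sigma_i<\sigma_j$ in Proposition \ref{Prop_Asymptotics_Sigma_1}(2)), but the content is the same.
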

\begin{proof}
Let us define
\begin{align}
&S_{1}^j:=\left\{ v\in S_1\ :\ \scal{v,w_j} =0 \right\}\\
&S_{2}^j:=\left\{ v\in S_{2}\ :\ \scal{v,w_j} =0 \right\}.
\end{align}
We want to prove that $\mathbb{L}_j$ defines an isomorphism between $S_{2}^j$ and $S_{1}^j$: this, together with the fact that $\mathbb{L}_j w_j=\sigma_j(\lambda_j)w_j=0$, will provide the claims (1) and (2). By Proposition \ref{Prop_Prop4.1_m=n} we have a continuous linear operator
\begin{align}
\mathbb{L}_j:S_{2}^j\to S_{1}^j.
\end{align}
Now note that
\begin{align}\label{App1_Prop_Prop5.1_m=n}
S_{2}^j=W_2^j\cap S_{2} \quad \andd \quad S_{1}^j=W_1^j\cap S_1.
\end{align}
By Proposition \ref{Prop_Prop4.1_m=n} we have that
\begin{align}\label{App2_Prop_Prop5.1_m=n}
\mathbb{L}_j:W_2^j\to W_1^j \quad \textnormal{defines an isomorphism}
\end{align}
and so, by \eqref{App1_Prop_Prop5.1_m=n},
\begin{align}
\mathbb{L}_j:S_{2}^j\to S_{1}^j \quad \textnormal{is injective.}
\end{align}

For the surjectivity, fix $f\in S_{1}^j$ and denote by $w\in W_2^j$ the unique solution to $\mathbb{L}_j w=f$ (recall \eqref{App2_Prop_Prop5.1_m=n}). We observe that the (extended) operator $\mathbb{L}_j$ can be characterized as the map $w \mapsto \mathbb{L}_j [w]$ so that
\begin{align}
\mathbb{L}_j [w](\eta,\xi)=-\pder{\widetilde{v}}{\theta}(\lambda)\frac{1}{\lambda} \pder{\varphi}{t}(1,\eta,\xi) +\pder{^2\widetilde{v}}{\theta^2}(\lambda) w(\eta, \xi)
\end{align}
for almost every $(\eta,\xi)\in \sss^1\times \sss^1$, where $\varphi\in W^{1,2}(\Omega)$ is the unique weak solution to \eqref{Eq_HarmonicExtension_KS_OLD}. Moreover, if $w\in S_2$, then also $\varphi$ only depends on $\xi$. By standard elliptic regularity, $\varphi \in C^{2,\alpha}(\overline{\Omega})$: by passing to the trace, we conclude that $w\in C^{2,\alpha}(\sss^1\times \sss^1)$ and hence $w\in S_{2}^j$, providing the desired surjectivity.

To conclude, we can observe that
\begin{align}
\partial_\lambda\Big|_{\lambda=\lambda_j} \mathbb{L}_{\lambda_j} w_j&=\partial_\lambda\Big|_{\lambda=\lambda_j} \sigma_j(\lambda) w_j\\
&=\sigma'_j(\lambda_j)w_j
\end{align}
and since $\sigma'_j(\lambda_j)>0$ by Proposition \ref{Prop_Asymptotics_Sigma_1}, point $(4)$, it follows
\begin{align}
\scal{\partial_\lambda\Big|_{\lambda=\lambda_j} \mathbb{L}_{\lambda_j} w_j, w_j}=\sigma_j'(\lambda_j)\scal{w_j, w_j}\neq 0
\end{align}
and hence $\partial_\lambda\Big|_{\lambda=\lambda_j} \mathbb{L}_{\lambda_j} w_j \notin R(\mathbb{L}_j^{S_2})$.
\end{proof}

\begin{proof}[Proof of Theorem \ref{Thm:Main}]
We are in force to apply Theorem \ref{Thm:CRBifurcation} to the operator $G$ acting on $\mathcal{O}$ and with
\begin{align}
\mathcal{C}=W_2^j,\quad \lambda_*=\lambda_j \quad \andd \quad a_*=\cos(j\xi),
\end{align}
hence providing the existence of a continuous curve
\begin{align}
(-\e,\e) & \to (\rr\times \mathcal{C})\cap \mathcal{O}\\
s & \mapsto (\lambda(s),w(s)),
\end{align}
where $(\lambda(0),w(0))=(\lambda_*,0)$ and so that for every $s\in (-\e,\e)$ one has $s(a_*+w(s))\in \mathcal{O}$ and
\begin{align}
G(\lambda(s),s(a_*+w(s)))=0.
\end{align}
This exactly means that for every $s\in (-\e,\e)$
\begin{align}
\pder{u_{\lambda_* + s(a_*+w(s))}}{\nu_{\lambda_* + s(a_*+w(s))}}=\pder{u_{\lambda_*}}{\nu_{\lambda_*}}\equiv \textnormal{const.}
\end{align}
and so that $\Omega_{\lambda_* + s(a_*+w(s))}$ is a Serrin domain. Moreover, we stress that $\partial \Omega_{\lambda_* + s(a_*+w(s))}$ is not isometric to any Clifford torus since $w(s)\in W_2^j$ for every $s\in (-\e,\e)$, implying that $a_*+w(s)$ is nonconstant for every $s \in (-\e,\e)$.
\end{proof}



\bigskip

\noindent\textbf{Acknowledgements.} The first author has been supported by ``Centro di Ricerca Matematica Ennio De Giorgi'' and he is a member of GNAMPA-INdAM. The second author has been supported by JSPS KAKENHI Grant Number JP22K03381. This research was started while the first author visited Tohoku University; he wishes to thank its kind hospitality.


\appendix

\section{Technical tools}\label{Sec:AppA}

Here we presents some technical tools necessary to the study of the spectrum of linearization of $H$. All the results present in this appendix are slightly variation of some results proved in \cite{FMW}.

To this aim we have to compute the Gateaux derivative of the functional
\begin{align}
\Psi:\mathcal{U}&\to C^{2,\alpha}(\overline{\Omega}, \rr\times \sss^1 \times \sss^1)\\
\phi&\mapsto \Psi^\phi,
\end{align}
where $\Psi^\phi$ is defined as in \eqref{Eq_ParamPsi_KS_OLD}, and of the functional
\begin{align}
\mathcal{M}:\mathcal{U}&\to C^{1,\alpha}(\partial \Omega)\\
\phi &\mapsto \frac{\partial m_\phi}{\partial \nu_\phi},
\end{align}
where $m:\mathcal{U}\to C^{2,\alpha}(\overline{\Omega})$, $\phi \mapsto m_\phi$, is a general smooth map. We stress that both $\Psi$ and $\mathcal{M}$ are smooth.

\begin{lemma}\label{Lem_App1_Linearization_KS_OLD}
For any $w\in C^{2,\alpha}(\sss^1\times \sss^1)$ it holds
\begin{align}
D_\phi \Psi^\phi [w] (t,\eta,\xi)=t w(\eta,\xi) \frac{\partial}{\partial t}\Big|_{(t,\eta,\xi)}
\end{align}
and
\begin{align}
D_\phi \mathcal{M}(\phi)[w](\eta,\xi)= \frac{\partial m_\phi}{\partial \widetilde{\nu}_\phi(w)}(\eta,\xi)+ \frac{\partial}{\partial \nu_\phi}\left(D_\phi m_\phi [w] \right)(\eta,\xi),
\end{align}
where
\begin{align}
\widetilde{\nu}_{\phi}(w)=D_\phi \nu_\phi [w].
\end{align}
\end{lemma}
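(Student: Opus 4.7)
The plan is to treat the two formulas separately, since each one amounts to a direct application of the chain/Leibniz rule to the explicit expressions for $\Psi^\phi$ and $\mathcal{M}(\phi)$, with attention paid to the fact that both $\nu_\phi$ and $m_\phi$ depend on $\phi$.

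For the first identity, I would simply differentiate the explicit formula. Setting $\phi_s := \phi + sw$, one has by \eqref{Eq_ParamPsi_KS_OLD}
\begin{align}
\Psi^{\phi_s}(t,\eta,\xi) = \bigl(t\phi(\eta,\xi) + s\,t\,w(\eta,\xi),\; \eta,\; \xi\bigr).
\end{align}
Differentiating in $s$ at $s=0$, the $\eta$- and $\xi$-components vanish, and the $t$-component yields the vector $t\,w(\eta,\xi)$, which, under the canonical identification of tangent vectors at $(t,\eta,\xi)\in \rr\times \sss^1\times \sss^1$ with the coordinate frame, is precisely $tw(\eta,\xi)\,\partial_t|_{(t,\eta,\xi)}$. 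Smoothness of $\phi\mapsto \Psi^\phi$ as a map into $C^{2,\alpha}$ is immediate from the fact that this map is in fact affine in $\phi$.

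For the second identity, the key observation is that the normal derivative can be written as a pointwise pairing, $\mathcal{M}(\phi)(p)=dm_\phi|_p[\nu_\phi(p)]$, which depends on $\phi$ in two places: once through the function $m_\phi$ and once through the normal field $\nu_\phi$. Formally, I would compose and apply the Leibniz rule to the composition $\phi \mapsto (m_\phi, \nu_\phi) \mapsto dm_\phi[\nu_\phi]$, which is legitimate since both factors are assumed to depend smoothly on $\phi\in \mathcal{U}$. Differentiating in the direction $w$ and using linearity of $dm_\phi$ in the vector argument gives
\begin{align}
D_\phi \mathcal{M}(\phi)[w] \;=\; d(D_\phi m_\phi[w])[\nu_\phi] \;+\; dm_\phi\bigl[D_\phi \nu_\phi[w]\bigr] \;=\; \frac{\partial (D_\phi m_\phi[w])}{\partial \nu_\phi} + \frac{\partial m_\phi}{\partial \widetilde{\nu}_\phi(w)},
\end{align}
which is the claimed formula.

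The only point that needs verification, and which I expect to be the subtlest part of the argument, is the smoothness and explicit form of the map $\phi\mapsto \nu_\phi$ as a map into $C^{1,\alpha}(\partial \Omega; T\Omega)$, because $\nu_\phi$ is defined as the normalization of $\nabla^{g^\phi}f$ and therefore depends nonlinearly on $\phi$ through the metric $g^\phi=(\Psi^\phi)^*g$. However, as the functional $\widetilde{\nu}_\phi(w):=D_\phi\nu_\phi[w]$ only appears abstractly in the statement, one need not compute it here; it suffices to check differentiability, which follows from the smooth dependence of $g^\phi$ on $\phi$ and the fact that $\nabla^{g^\phi}f$ has nonvanishing norm near $\partial \Omega$. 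All remaining steps reduce to the chain rule.
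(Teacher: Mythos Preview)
Your proposal is correct and takes essentially the same approach as the paper, which simply declares the computation to be straightforward and refers to \cite[Lemma 2.5]{FMW}. Your argument in fact supplies more detail than the paper does: you differentiate the affine map $\phi\mapsto \Psi^\phi$ explicitly for the first identity, and for the second you correctly recognize $\mathcal{M}(\phi)(p)=dm_\phi|_p[\nu_\phi(p)]$ as bilinear in the pair $(m_\phi,\nu_\phi)$ and apply the Leibniz rule, noting that the exterior differential $d$ is $\phi$-independent so that $D_\phi(dm_\phi)[w]=d(D_\phi m_\phi[w])$.
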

\begin{proof}
In our setting the computation is straightforward. See also \cite[Lemma 2.5]{FMW}.
\end{proof}

Let $ u^\phi := (\Psi^\phi)^* \widetilde{v} = \widetilde{v} \circ \Psi^\phi$: since $\Psi^\phi:(\Omega, g^\phi)\to (\Omega_\phi, g)$ is an isometry (recall: $g^\phi:=(\Psi^\phi)^*g$) and since $-\Delta^g \widetilde{v}=1$, the function $u^\phi$ satisfies
\begin{align}
-\Delta^{g^\phi} u^\phi=1 \quad \inn \Omega.
\end{align}
Define the functional
\begin{align}
h:\mathcal{U}\subset C^{2,\alpha}(\sss^1\times \sss^1) &\to C^{1,\alpha}(\sss^1\times \sss^1)\\
\phi & \mapsto \pder{u^\phi}{\nu_\phi} (1,\cdot, \cdot).
\end{align}

\begin{lemma}\label{Lem_App2_Linearization_KS_OLD}
For any $w\in C^{2,\alpha}(\sss^1\times \sss^1)$, $\lambda\in \left(0,\frac{\pi}{2}\right)$ and $(\eta,\xi)\in \sss^1\times \sss^1$ it holds
\begin{align}
\left(D_\phi\Big|_{\phi\equiv \lambda} h\right) [w](\eta,\xi)=-\frac{1}{2\cos^2(\lambda)} w(\eta,\xi).
\end{align}
\end{lemma}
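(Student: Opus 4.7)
The plan is to exploit the fact that $\Psi^\phi:(\Omega,g^\phi)\to(\Omega_\phi,g)$ is an isometry to rewrite
\begin{equation}
h(\phi)(\eta,\xi)=\pder{u^\phi}{\nu_\phi}(1,\eta,\xi)=\pder{\widetilde{v}}{\mu_\phi}(\phi(\eta,\xi),\eta,\xi),
\end{equation}
where $\mu_\phi=d\Psi^\phi[\nu_\phi]$ is the $g$-outward unit normal to $\partial\Omega_\phi\subset(\sss^3,g)$ and $\widetilde{v}(\theta)=\tfrac{1}{2}\ln\cos(\theta)$ depends only on $\theta$. The advantage of this reformulation is that $\widetilde{v}$ no longer depends on $\phi$, so the entire $\phi$-dependence is encoded in the geometric object $\mu_\phi$ and in the evaluation point $(\phi(\eta,\xi),\eta,\xi)$.

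Next I would compute $\mu_\phi$ explicitly by regarding $\partial\Omega_\phi$ as the zero level set of the map $F(\theta,\eta,\xi):=\theta-\phi(\eta,\xi)$. Using the diagonal form $g=d\theta^2+\sin^2(\theta)\,d\eta^2+\cos^2(\theta)\,d\xi^2$, one obtains
\begin{equation}
\nabla^g F=\partial_\theta-\frac{\phi_\eta}{\sin^2(\theta)}\partial_\eta-\frac{\phi_\xi}{\cos^2(\theta)}\partial_\xi.
\end{equation}
Since $\widetilde{v}$ depends only on $\theta$, the directional derivative along $\mu_\phi=\nabla^g F/|\nabla^g F|_g$ is controlled solely by the $\partial_\theta$-component of $\mu_\phi$; evaluating at $\theta=\phi(\eta,\xi)$ yields
\begin{equation}
h(\phi)(\eta,\xi)=\frac{\widetilde{v}'(\phi(\eta,\xi))}{\sqrt{1+\frac{\phi_\eta^2}{\sin^2(\phi)}+\frac{\phi_\xi^2}{\cos^2(\phi)}}}.
\end{equation}

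Finally I would linearize this expression at $\phi\equiv\lambda$. Writing $\phi_s:=\lambda+sw$, the tangential derivatives $(\phi_s)_\eta=sw_\eta$ and $(\phi_s)_\xi=sw_\xi$ are both $O(s)$, so the square root in the denominator equals $1+O(s^2)$ and contributes nothing to the $s$-derivative at $s=0$. Only the numerator matters, and a one-variable Taylor expansion of $\widetilde{v}'(\lambda+sw)$ yields
\begin{equation}
\left(D_\phi\Big|_{\phi\equiv\lambda}h\right)[w](\eta,\xi)=w(\eta,\xi)\,\widetilde{v}''(\lambda)=-\frac{w(\eta,\xi)}{2\cos^2(\lambda)},
\end{equation}
since $\widetilde{v}''(\theta)=-\tfrac{1}{2}\sec^2(\theta)$. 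There is no substantive obstacle here: the crucial observation is that the tangential quantities $\phi_\eta$ and $\phi_\xi$ vanish identically at the constant reference $\phi\equiv\lambda$, which forces every correction coming from the non-normalization of $\nabla^g F$ to enter only at second order in $s$, so that only the straightforward first-order variation of $\widetilde{v}'\circ\phi$ contributes.
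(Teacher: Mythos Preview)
Your argument is correct. Both your proof and the paper's proof hinge on the same underlying fact---that the first-order variation of the outward unit normal at $\phi\equiv\lambda$ is orthogonal to $\partial_\theta$ and hence drops out when paired with $\nabla^g\widetilde{v}$---but the two executions differ. You obtain an explicit closed formula
\[
h(\phi)(\eta,\xi)=\frac{\widetilde{v}'(\phi)}{\sqrt{1+\phi_\eta^2/\sin^2\phi+\phi_\xi^2/\cos^2\phi}}
\]
by computing $\mu_\phi=\nabla^g F/|\nabla^g F|_g$ for $F=\theta-\phi$, and then observe that the denominator is $1+O(s^2)$ along $\phi_s=\lambda+sw$. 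The paper instead differentiates the abstract expression $h(\phi)=g(\widehat{\mu_\phi},\widetilde{v}'\partial_\theta)|_{\Psi^\phi(1,\cdot,\cdot)}$ directly, using the coordinate-free identity $g(D_\phi\widehat{\mu_\phi}[w],\widehat{\mu_\phi})=\tfrac12 D_\phi\,g(\widehat{\mu_\phi},\widehat{\mu_\phi})[w]=0$ (since $|\widehat{\mu_\phi}|_g\equiv 1$) to kill the normal-variation term, and then invokes Lemma~\ref{Lem_App1_Linearization_KS_OLD} to identify the remaining term as $\widetilde{v}''(\lambda)w$. Your route is more self-contained and elementary (no auxiliary lemma on $D_\phi\Psi^\phi$ is needed, and the $O(s^2)$ observation is transparent), while the paper's route is coordinate-free and generalizes more readily to settings where the boundary normal is not easily written down in closed form.
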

\begin{proof}
The claim follows as in \cite[Lemma 2.6]{FMW}. Let us define
\begin{align}
&\widehat{\mu_\phi}(\eta,\xi):=\mu_\phi\left(\Psi^\phi(1,\eta,\xi)\right)=\pder{}{\theta} \Bigg|_{\Psi^\phi(1,\eta,\xi)}
\end{align}
and observe that
\begin{align}
h(\phi)(\eta,\xi)&=g^\phi \left(\nu_\phi, \nabla^{g^\phi} u^\phi\right)\Big|_{ (1, \eta, \xi)}\\
&=g\left(\widehat{\mu_\phi}, \nabla^g \widetilde{v} \right)\Big|_{\Psi^\phi(1,\eta, \xi)}\\
&= g\left(\widehat{\mu_\phi}, \pder{\widetilde{v}}{\theta} \frac{\partial}{\partial \theta} \right)\Big|_{\Psi^\phi(1,\eta, \xi)}
\end{align}
where in the second equality we have used the fact that $\Psi^\phi$ is an isometry. Noticing that the map
\begin{align}
\mathcal{U}&\to C^{1,\alpha}(\sss^1\times \sss^1)\\
\phi &\mapsto g\left(\widehat{\mu_\phi}, \widehat{\mu_\phi} \right)\equiv 1
\end{align}
is constant, then it follows that
\begin{align}
g\left(D_\phi \widehat{\mu_\phi} [w], \widehat{\mu_\phi} \right)=\frac{1}{2} \left(D_\phi g\left(\widehat{\mu_\phi}, \widehat{\mu_\phi} \right)\right)[w]=0
\end{align}
for every $w \in C^{2,\alpha}(\sss^1\times \sss^1)$ and every $\phi \in \mathcal{U}$. Hence, for any $\lambda \in (0,\frac{\pi}{2})$, $w\in C^{2,\alpha}(\sss^1\times\sss^1)$ and $(\eta,\xi) \in \sss^1\times \sss^1$
\begin{align}
\left(D_\phi \Big|_{\phi\equiv \lambda} h(\Phi)\right) [w] =& g\left(\left(D_\Phi \Big|_{\phi\equiv \lambda} \widehat{\mu_\phi} \right)[w], \pder{\widetilde{v}}{\theta} \frac{\partial}{\partial \theta} \right)\Big|_{\Psi^\lambda(1,\eta,\xi)}\\
&+g\left(\widehat{\mu_\lambda}, \pder{^2\widetilde{v}}{\theta^2} D_\phi\Big|_{\phi\equiv \lambda} \Psi^\phi [w] \frac{\partial}{\partial \theta} \right)\Big|_{\Psi^\lambda(1,\eta,\xi)}\\
=& \pder{\widetilde{v}}{\theta}(\lambda) \underbrace{g\left(\left(D_\phi \Big|_{\phi\equiv \lambda} \widehat{\mu_\phi}\right)[w], \widehat{\mu_\lambda} \right)\Big|_{\Psi^\lambda( 1,\eta,\xi)}}_{=0}\\
&+g\left(\widehat{\mu_\lambda}, \pder{^2\widetilde{v}}{\theta^2}  D_\phi\Big|_{\phi\equiv \lambda} \Psi^\phi [w] \frac{\partial}{\partial \theta} \right)\Big|_{\Psi^\lambda(1,\eta,\xi)}\\
=&g\left(\widehat{\mu_\lambda}, \widehat{\mu_\lambda} \right)\Big|_{\Psi^\lambda(1,\eta,\xi)} \pder{^2\widetilde{v}}{\theta^2}(\lambda) w(\eta, \xi)\\
=& \pder{^2\widetilde{v}}{\theta^2}(\lambda) w(\eta, \xi).
\end{align}
\end{proof}

\begin{proof}[Proof of Proposition \ref{Prop_Linearization_KS_OLD}] The proof proceeds exactly as in \cite[Proposition 2.4]{FMW}. Fix $\phi \in \mathcal{U}$ and define $a_\phi:=u_\phi - u^\phi$, which is a solution to
\begin{align}\label{Eq_Problem_a-phi_KS_OLD}
\begin{cases}
-\Delta^{g^\phi} a_\phi=0 & \inn \Omega\\
a_\phi=-u^\phi & \onn \partial \Omega.
\end{cases}
\end{align}
In case $\phi\equiv \lambda$ is constant, for any $(t,\eta,\xi)\in \overline{\Omega}$ we have
\begin{align}
u^\lambda(t,\eta,\xi) = \widetilde{v}(t\lambda)
\end{align}
and
\begin{align}
u_\lambda(t,\eta,\xi)=\widetilde{v}(t\lambda)-\widetilde{v}(\lambda)
\end{align}
implying
\begin{align}\label{Eq_a-phi_constant_KS_OLD}
a_\lambda = - \widetilde{v}\left(\lambda\right),
\end{align}
constant on $\overline{\Omega}$. Now define
\begin{align}
T:\mathcal{U}&\to C^{0,\alpha}(\overline{\Omega})\\
\phi&\mapsto \Delta^{g^\phi} a_\phi.
\end{align}
In particular, $T\equiv 0$ on $\mathcal{U}$ and hence, differentiating $T$ with respect to $\phi$ and evaluating at $\phi\equiv \lambda$ constant, for every fixed $w\in C^{2,\alpha}(\overline{\Omega})$ we get
\begin{align}\label{App1_Prop_Linearization_KS_OLD}
0=D_\phi \Big|_{\phi\equiv \lambda} T(\phi) [w]=\Delta^{g^\lambda} \left(D_\phi \Big|_{\phi\equiv \lambda} a_\phi [w] \right) \quad \onn \Omega,
\end{align}
where the second identity follows from a direct computation together with \eqref{Eq_a-phi_constant_KS_OLD}. Moreover, differentiating the boundary condition in \eqref{Eq_Problem_a-phi_KS_OLD} with respect to $\phi$ and evaluating at $\phi\equiv \lambda$, we obtain
\begin{equation}\label{App2_Prop_Linearization_KS_OLD}
\begin{split}
D_\phi \Big|_{\phi\equiv \lambda} a_\phi [w] (1,\eta,\xi)=-\widetilde{v}'(\lambda) w(\eta,\xi)
\end{split}
\end{equation}
thanks to \eqref{Eq_a-phi_constant_KS_OLD}. From \eqref{App1_Prop_Linearization_KS_OLD} and \eqref{App2_Prop_Linearization_KS_OLD}, it follows that
\begin{align}
\varphi^\lambda=-\frac{D_\phi \Big|_{\phi\equiv \lambda} a_\phi [w]}{\widetilde{v}'(\lambda)},
\end{align}
where $\varphi^\lambda$ is the solution to \eqref{Eq_HarmonicExtension_KS_OLD}. Again by Lemma \ref{Lem_App1_Linearization_KS_OLD} it also follows
\begin{align}\label{App4_Prop_Linearization_KS_OLD}
D_\phi \Big|_{\phi\equiv \lambda} \left(\frac{\partial a_\phi}{\partial \nu_\phi}\right)[w](1,\eta,\xi) =& \frac{\partial a_\lambda}{\partial \widetilde{\nu}_\lambda (w)}(1,\eta,\xi)\\
&+ \frac{\partial}{\partial \nu_\lambda}\left(D_\phi |_{\phi\equiv \lambda} a_\phi [w] \right)(1,\eta,\xi)\\
=& \frac{1}{\lambda} \frac{\partial}{\partial t}\Big|_{t=1}\left(D_\phi |_{\phi\equiv \lambda} a_\phi [w] \right)(\eta,\xi)
\end{align}
for any $(\eta,\xi)\in \sss^1\times \sss^1$, where in the second equality we have used \eqref{Eq_a-phi_constant_KS_OLD} and the fact that $\nu_\lambda(t,\eta,\xi)=\frac{1}{\lambda}\frac{\partial}{\partial t}$ on $\partial \Omega$. Recalling that $u_\phi=a_\phi+u^\phi$, by \eqref{App4_Prop_Linearization_KS_OLD} and by Lemma \ref{Lem_App2_Linearization_KS_OLD} we get for any $(\eta,\xi)\in \sss^1\times \sss^1$
\begin{align}
D_\phi \Big|_{\phi\equiv \lambda} H(\phi)[w] (\eta,\xi) =& D_\phi \Big|_{\phi\equiv \lambda} \left(\frac{\partial u_\phi}{\partial \nu_\phi}\right)[w] (1,\eta,\xi) \\
=& D_\phi \Big|_{\phi\equiv \lambda} \left(\frac{\partial \left(a_\phi + u^\phi \right)}{\partial \nu_\phi}\right)[w] (1,\eta,\xi)\\
=& \frac{1}{\lambda} \frac{\partial}{\partial t}\Big|_{t=1}\left(D_\phi |_{\phi\equiv \lambda} a_\phi [w] \right)(\eta,\xi) +\widetilde{v}''(\lambda) w(\eta, \xi)\\
=& -\frac{\widetilde{v}'(\lambda)}{\lambda} \pder{\varphi^\lambda}{t}(1,\eta,\xi) +\widetilde{v}''(\lambda) w(\eta, \xi)\\
=& \frac{\tan(\lambda)}{2\lambda} \pder{\varphi^\lambda}{t}(1,\eta,\xi)-\frac{1}{2} \frac{1}{\cos^2(\lambda)}w(\eta,\xi).
\end{align}
\end{proof}


\section{Proof of Proposition \ref{Prop_Asymptotics_Sigma_1_Second_Case}}\label{Sec:AppB}

Let $w(\eta,\xi)=w(\eta)$ be one of
\begin{align}
\cos(n\eta) \quad \textnormal{or}\quad \sin(n\eta)
\end{align}
for $n\in \mathbb{N}$. Let $\psi=l(t)w(\eta)$ be the solution to
\begin{align}
\begin{cases}
-\Delta^{g^\lambda} \psi=0 & \inn \Omega\\ \psi=w & \onn \partial \Omega.
\end{cases}
\end{align}
In this case the function $l$ satisfies
\begin{align}
\begin{cases}
\lambda^{-2} l''(t) + \lambda^{-1} \left[\frac{\cos(t\lambda)}{\sin(t\lambda)}-\frac{\sin(t\lambda)}{\cos(t\lambda)} \right] l'(t) - \frac{n^2}{\sin^2(t\lambda)} l(t)=0 & \inn (0,1)\\
l(1)=1.
\end{cases}
\end{align}
As before, let $\theta=t\lambda$ and set $L(\theta):=l\left(\frac{\theta}{\lambda}\right)$. Then
\begin{align}
L'(\theta)=\frac{1}{\lambda} l'\left(\frac{\theta}{\lambda} \right) \quad \andd \quad L''(\theta)=\frac{1}{\lambda^2}l''\left(\frac{\theta}{\lambda} \right)
\end{align}
that provides
\begin{align}\label{Eq:L_ball_2}
\begin{cases}
L''(\theta)+\left[ \frac{\cos(\theta)}{\sin(\theta)}-\frac{\sin(\theta)}{\cos(\theta)}\right] L'(\theta) - \frac{n^2}{\sin^2(\theta)} L(\theta)=0 & \inn (0,\lambda)\\
L(\lambda)=1.
\end{cases}
\end{align}
Fix the solution $L=L^*(\theta)$ regular at $\theta=0$ and normalized so to have $L^*\left(\frac{\pi}{4}\right)=1$ and denote
\begin{align}
B(\theta)=\theta \left[\frac{\cos(\theta)}{\sin(\theta)}-\frac{\sin(\theta)}{\cos(\theta)} \right] \quad \quad \andd \quad \quad C(\theta)=-\theta^2 \frac{n^2}{\sin^2(\theta)}.
\end{align}
Since
\begin{align}
B(0)=1, \quad C(0)=-n^2 \quad \andd \quad L''(\theta)+\frac{B(\theta)}{\theta}L'(\theta) + \frac{C(\theta)}{\theta^2}L(\theta)=0,
\end{align}
it follows that the indicial equation associated to the differential equation of \eqref{Eq:L_ball_2} is
\begin{align}
0=r(r-1)+B(0)r+C(0)=r^2-n^2=(r-n)(r+n)
\end{align}
with roots $n$ and $-n$. Again by Theorem 2 in \cite[Section 7.3]{AD12}, we have two independent Frobenius solutions near $\theta=0$:
\begin{align}
& L_1(\theta)=\theta^n\sum_{j\geq 0} c_j \theta^j\\
&L_2(\theta)=\e L_1(\theta)\ln(\theta)+\theta^{-n}\sum_{j\geq 0} C_j \theta^j \quad \quad (\e \in \{0,1\})
\end{align}
with $c_0\neq 0$ and $C_0\neq 0$. Since $L^*$ is regular at $\theta=0$, it follows that
\begin{align}
L^*(\theta)=C^* L_1(\theta),
\end{align}
where $C^*$ is a constant. Hence $L^*$ is analytic with $L^*(0)=0$ ($n\geq 1$) and, by continuation, $L^*$ can be defined in the whole $\left(0, \frac{\pi}{2} \right)$.


\begin{proposition}[Properties of $L^*$]\label{Prop:Properties_L_ball_2}
Let $n \in \mathbb{N}$. The following properties hold
\begin{enumerate}
\item $L^*>0$ in $\left(0,\frac{\pi}{2} \right)$ and $L^*(0)=0$;
\item $(L^*)'>0$ in $\left( 0,\frac{\pi}{2}\right)$, and hence $L^*\left(\frac{\pi}{2}\right)>0$.
\end{enumerate}
\end{proposition}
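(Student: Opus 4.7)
The strategy parallels that of Proposition \ref{Prop:Properties_L_ball}, but with one essential structural difference: the indicial equation now has roots $\pm n$ instead of a double root at $0$. Consequently the Frobenius expansion of the regular solution $L^*(\theta) = C^* \theta^n \sum_{j \geq 0} c_j \theta^j$ with $c_0 \neq 0$ gives $L^*(0) = 0$ (settling half of (1)) and the asymptotics
\begin{align}
L^*(\theta) \sim C^* c_0\, \theta^n, \qquad (L^*)'(\theta) \sim n C^* c_0\, \theta^{n-1} \qquad \text{as } \theta \to 0^+.
\end{align}
Because $n \geq 1$, simple supersolutions (constants) will no longer separate $L^*$ from zero near the origin, so the comparison argument of Proposition \ref{Prop:Properties_L_ball} must be replaced by an argument based on the divergence form of the equation.

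Rewriting the ODE as
\begin{align}
\bigl(\sin(2\theta)\, (L^*)'(\theta)\bigr)' = \frac{2 n^2 \cos\theta}{\sin\theta}\, L^*(\theta),
\end{align}
I first pin down the sign of $C^* c_0$. I would argue that any solution $L$ with leading coefficient $c_0 > 0$ stays positive on $(0,\pi/2)$: it is positive on some maximal interval $(0,b)$ with $b \leq \pi/2$, and integrating the divergence form from $0^+$ (where $\sin(2\theta)(L^*)'(\theta) \to 0$ thanks to the $\theta^n$ asymptotics) yields $\sin(2\theta)\, L'(\theta) > 0$ on $(0,b]$; if $b < \pi/2$ this forces $L'(b) > 0$, contradicting $L(b) = 0$ being a first zero. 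In particular $L(\pi/4) > 0$, and since $L^*(\pi/4) = 1$ was imposed, normalization forces $C^* c_0 > 0$. Hence $L^*$ and $(L^*)'$ are both positive in a right neighborhood of $0$.

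Next I would propagate this positivity across $(0,\pi/2)$ by the same open-closed scheme used for Proposition \ref{Prop:Properties_L_ball}. Define
\begin{align}
\mathcal{L}^+ := \bigl\{\theta \in (0,\tfrac{\pi}{2}) \,:\, L^*(\theta) > 0 \text{ and } (L^*)'(\theta) > 0\bigr\},
\end{align}
and let $(0,b)$ be its connected component accumulating at $0$. If $b < \pi/2$, then by continuity $L^*(b) \geq 0$ and $(L^*)'(b) = 0$. But the divergence form shows that $\sin(2\theta)\,(L^*)'(\theta)$ is strictly increasing on $(0,b]$ (since the right-hand side is strictly positive there), and it tends to $0$ as $\theta \to 0^+$, so $\sin(2b)(L^*)'(b) > 0$; as $\sin(2b) > 0$ this contradicts $(L^*)'(b) = 0$. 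Therefore $b = \pi/2$, giving (1) and (2) on $(0,\pi/2)$. Finally, monotonicity and $L^*(\pi/4) = 1$ imply $L^*(\pi/2^-) > 0$.

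\textbf{Main obstacle.} The trickiest point is the analysis at the singular endpoint $\theta = 0$: because $L^*(0) = 0$ and the coefficient $\cot\theta - \tan\theta$ blows up, the clean comparison with constant barriers used previously fails. Handling this requires leaning on the precise $\theta^n$ Frobenius asymptotics to ensure integrability of $\frac{\cos\theta}{\sin\theta} L^*(\theta)$ near $0$ and vanishing of the boundary term $\sin(2\theta)(L^*)'(\theta)$, both of which make the divergence form usable all the way down to the origin.
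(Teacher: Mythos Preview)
Your proposal is correct and follows essentially the same route as the paper: both arguments hinge on the divergence form $(\sin(2\theta)(L^*)')' = \dfrac{2n^2\cos\theta}{\sin\theta}\,L^*$ and the open--closed propagation via the set $\mathcal{L}^+$. The only real difference is at the singular endpoint $\theta=0$: the paper dispatches initial positivity on $(0,\pi/4)$ with a one-line ``by comparison'' (using $L^*(0)=0$, $L^*(\pi/4)=1$ and the constant barriers $0$ and $1$), whereas you extract the sign of $C^*c_0$ directly from the Frobenius asymptotics $L^*\sim C^*c_0\,\theta^n$ together with the integrability of $\tfrac{\cos\theta}{\sin\theta}L^*$ and the vanishing of $\sin(2\theta)(L^*)'$ at $0$. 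Your treatment is in fact more explicit on the point you flagged as the main obstacle, and once you have established $C^*c_0>0$ your first positivity argument already yields $L^*>0$ and $(L^*)'>0$ on all of $(0,\pi/2)$, so the subsequent $\mathcal{L}^+$ step is redundant (though harmless).
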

\begin{proof}
We already know that $L^*(0)=0$. Since $L^*\left(\frac{\pi}{4}\right)=1$ and since $L^*$ is smooth, the set 
\begin{align}
\mathcal{L}^+:=\left\{\theta \in \left(0,\frac{\pi}{2}\right)\ :\ L^*(\theta)>0\ \ \andd\ \ (L^*)'(\theta)>0\right\}
\end{align}
has nonempty interior. We claim that $\mathcal{L}^+=\left(0,\frac{\pi}{2}\right)$. 

Firstly notice that, by comparison, $0<L^*<1$ in $\left(0,\frac{\pi}{4}\right)$. Since
\begin{align}
\left(\sin(2\theta)(L^*)'\right)'=\frac{n^2 \sin(2\theta)}{\sin^2(\theta)} L^*>0
\end{align}
it follows that $\sin(2\theta)(L^*)'$ is increasing in $\left(0,\frac{\pi}{4}\right)$ and hence $(L^*)'>0$ in $\left(0,\frac{\pi}{4}\right)$, implying that $\left(0,\frac{\pi}{4}\right)\subset \mathcal{L}^+$.

Now let $(0,b)$ be the connected component of $\mathcal{L}^+$ containing $\left(0,\frac{\pi}{4}\right)$. Clearly, $L^*(b)>0$. If $b<\frac{\pi}{2}$, then $(L^*)'(b)=0$ and so $(L^*)''(b)\leq 0$: this contradicts the differential equation of \eqref{Eq:L_ball_2}. It follows that $b=\frac{\pi}{2}$ and hence $\mathcal{L}^+=\left(0,\frac{\pi}{2}\right)$.
\end{proof}


\begin{proposition}[Behaviour of $L^*$ near $\frac{\pi}{2}$]\label{Prop:Behav_L*_ball_2}
Let $n \in \mathbb{N}$. Set $s=\frac{\pi}{2}-\theta>0$ and $\widetilde{L}^*(s)=L^*(\theta)=L^*\left(\frac{\pi}{2}-s\right)$. Then, $\widetilde{L}^*$ satisfies
\begin{align}
\left(\widetilde{L}^*\right)''(s)-\left[\frac{\sin(s)}{\cos(s)}-\frac{\cos(s)}{\sin(s)}\right] \left(\widetilde{L}^*\right)'(s)-\frac{n^2}{\cos^2(s)}\widetilde{L}^*(s)=0
\end{align}
and, as $s\to 0^+$,
\begin{align}
\widetilde{L}^*(s)\sim c_* \ln(s) \quad and \quad \left(\widetilde{L}^*\right)'(s)\sim c_* s^{-1}
\end{align}
for some $c_*<0$.
\end{proposition}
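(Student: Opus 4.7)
My plan is to closely mirror the strategy of Proposition \ref{Prop:Behav_L*_ball}: derive the ODE by change of variables, carry out a Frobenius analysis at $s = 0$, and then determine which of the two independent Frobenius solutions actually contribute to $\widetilde{L}^*$. The substitution $s = \pi/2 - \theta$ gives $\widetilde{L}^{*\prime}(s) = -(L^*)'(\theta)$ and $\widetilde{L}^{*\prime\prime}(s) = (L^*)''(\theta)$, together with $\cos\theta = \sin s$ and $\sin\theta = \cos s$; substituting into \eqref{Eq:L_ball_2} and tracking signs yields the stated ODE for $\widetilde{L}^*$.

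I would next rewrite the ODE in standard form $\widetilde{L}^{*\prime\prime} + (B(s)/s)\widetilde{L}^{*\prime} + (C(s)/s^2)\widetilde{L}^* = 0$ with $B(s) = -s[\sin s/\cos s - \cos s/\sin s]$ and $C(s) = -s^2 n^2/\cos^2 s$. Using $\cos s/\sin s \sim 1/s$ one finds $B(0) = 1$ and $C(0) = 0$, so the indicial equation is $r(r-1) + r + 0 = r^2 = 0$, a double root at $r = 0$. By Theorem 2 of \cite[Section 7.3]{AD12}, the general solution near $s = 0$ has the form $\widetilde{L}^*(s) = \alpha L_1(s) + \beta L_2(s)$, where $L_1(s) = \sum_{j\geq 0} a_j s^j$ is analytic with $a_0 \neq 0$ (normalize $a_0 = 1$) and $L_2(s) = L_1(s)\ln s + \sum_{j\geq 1} C_j s^j$.

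The main obstacle, and the substantive part of the proof, is showing $\beta \neq 0$. Unlike in Proposition \ref{Prop:Behav_L*_ball}, where $L_1 \sim s^n \to 0$ made the argument immediate from $L^*(\pi/2) > 0$, here $L_1$ has a nonzero finite limit at $0$, so $\beta = 0$ would simply correspond to $L^*$ remaining bounded up to $\pi/2$. I would rule this out by proving that $L^*(\theta) \to +\infty$ as $\theta \to \pi/2^-$. Writing \eqref{Eq:L_ball_2} in divergence form,
\begin{align}
\bigl(\sin(2\theta)(L^*)'(\theta)\bigr)' = \frac{2 n^2 \cos\theta}{\sin\theta}\, L^*(\theta) > 0,
\end{align}
so $\theta \mapsto \sin(2\theta)(L^*)'(\theta)$ is strictly increasing on $(0,\pi/2)$. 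Combined with Proposition \ref{Prop:Properties_L_ball_2}, for $\theta \in [\pi/4, \pi/2)$ this gives $\sin(2\theta)(L^*)'(\theta) \geq \sin(\pi/2)(L^*)'(\pi/4) = (L^*)'(\pi/4) > 0$, hence $(L^*)'(\theta) \geq (L^*)'(\pi/4)/\sin(2\theta)$. Using $\int_{\pi/4}^\theta d\phi/\sin(2\phi) = \tfrac{1}{2}\ln\tan\theta$, integration yields
\begin{align}
L^*(\theta) \geq L^*(\pi/4) + \tfrac{1}{2}(L^*)'(\pi/4)\,\ln\tan\theta \xrightarrow[]{\theta \to \pi/2^-} +\infty.
\end{align}

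Since $L_1$ is bounded near $0$ and $\widetilde{L}^*(s) \to +\infty$ as $s\to 0^+$, necessarily $\beta \neq 0$. From the expansion $\widetilde{L}^*(s) = \beta \ln s + O(1)$, the divergence $\ln s \to -\infty$ forces $\beta < 0$; setting $c_* := \beta$ gives $\widetilde{L}^*(s) \sim c_*\ln s$. Differentiating term-by-term, $L_1'$ is bounded at $0$ while $L_2'(s) = L_1(s)/s + L_1'(s)\ln s + \sum_{j\geq 1} j C_j s^{j-1} \sim 1/s$, so $\widetilde{L}^{*\prime}(s) \sim \beta/s = c_*/s$, completing the required asymptotic expansion.
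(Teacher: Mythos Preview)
Your proof is correct and follows essentially the same route as the paper: change of variables, Frobenius analysis at $s=0$ with double indicial root $r=0$, and then the divergence-form identity $(\sin(2\theta)(L^*)')' = \tfrac{n^2\sin(2\theta)}{\sin^2\theta}L^* > 0$ to rule out $\beta = 0$. The only minor difference is that the paper stops at $(L^*)'(\theta)\to+\infty$ (which already contradicts $\widetilde{L}^*$ being analytic at $s=0$), whereas you integrate one step further to get $L^*(\theta)\to+\infty$; your version is slightly more explicit about the sign of $c_*$ and the derivative asymptotic, but the core argument is the same.
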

\begin{proof}
Let
\begin{align}
B(s)=-s\left[\frac{\sin(s)}{\cos(s)}-\frac{\cos(s)}{\sin(s)}\right] \quad \quad \andd \quad \quad C(s)=-s^2 \frac{n^2}{\cos^2(s)}.
\end{align}
Then, $B(0)=1$ and $C(0)=0$. The indicial equation is
\begin{align}
0=r(r-1)+B(0)r+C(0)=r^2
\end{align}
whose root is $0$ with multiplicity 2. By Theorem 2 in \cite[Section 7.3]{AD12}, we have two independent Frobenius solutions near $s=0$:
\begin{align}
&L_1(s)=\sum_{j\geq 0} c_j s^j\\
&L_2(s)= L_1(s) \ln(s) + \sum_{j\geq 0} C_j s^j
\end{align}
where $c_0\neq 0$ and $C_0\neq 0$. Since \eqref{Eq:L_ball_2} can be written as
\begin{align}
\left(\sin(2\theta) \left(L^*\right)' \right)'=\frac{n^2 \sin(2\theta)}{\sin^2(\theta)} L^*>0
\end{align}
it follows that $\sin(2\theta) \left(L^*\right)'$ is increasing as $\theta \to \frac{\pi}{2}$. Since $\sin(2\theta)\to 0$ as $\theta \to \frac{\pi}{2}$, then necessarily
\begin{align}\label{Eq:L^*_asympt_2}
\left(L^*\right)'(\theta)\to + \infty \quad \textnormal{as}\ \theta \to \frac{\pi}{2}.
\end{align}
Hence, near $s=0$,
\begin{align}
\widetilde{L}^*(s)=aL_1(s)+bL_2(s)
\end{align}
for some constants $a$ and $b$. In particular, by \eqref{Eq:L^*_asympt_2}, $b\neq 0$. Thus, the claim follows.
\end{proof}

\subsubsection{Behaviour of the eigenvalues}
Fix $w$ as in previous subsection. Then
\begin{align}
D_\phi \Big|_{\phi \equiv \lambda} H(\phi)[w] &=\left(\frac{1}{2\lambda} \tan(\lambda) l'(1) - \frac{1}{2\cos^2(\lambda)} \right)w\\
&=: \sigma_n(\lambda) w.
\end{align}
Set $L(\theta)=\frac{L^*(\theta)}{L^*(\lambda)}$, so to have $L(\lambda)=1$ and $L(\theta)=l\left(\frac{\theta}{\lambda} \right)$. Hence
\begin{align}
L'(\theta)=\frac{1}{\lambda} l'\left(\frac{\theta}{\lambda}\right) \quad \andd \quad L'(\lambda)=\frac{1}{\lambda} l'(1).
\end{align}
Thus
\begin{align}
\sigma_n(\lambda)= \frac{1}{2} \tan(\lambda) \frac{(L^*)'(\lambda)}{L^*(\lambda)}-\frac{1}{2\cos^2(\lambda)}.
\end{align}
By Proposition \ref{Prop:Behav_L*_ball_2}, as $\lambda \to \frac{\pi}{2}$,
\begin{align}
\frac{(L^*)'(\lambda)}{L^*(\lambda)} &=\frac{-(\widetilde{L}^*)'\left(\frac{\pi}{2}-\lambda\right)}{\widetilde{L}^* \left( \frac{\pi}{2}-\lambda\right)}\\
&\sim \frac{- c_*}{c_* \left(\frac{\pi}{2}-\lambda\right) \ln\left(\frac{\pi}{2}-\lambda\right)}\\
&=\frac{-1}{\left(\frac{\pi}{2}-\lambda\right) \ln\left(\frac{\pi}{2}-\lambda\right)}
\end{align}
and hence we conclude that if $n \ge 1$ then
\begin{align}\label{Eq:Asymp_eigenv_infinity_ball_2}
\sigma_n(\lambda)\to -\infty \quad \textnormal{as}\ \lambda\to \frac{\pi}{2}.
\end{align}

On the other hand, consider the function $k(\theta):=\frac{\tan(\theta)}{\tan(\lambda)}$, which satisfies
\begin{align}
\left( \sin(2\theta) k'(\theta)\right)'=\frac{2}{\tan(\lambda)} \frac{1}{\cos^2(\theta)}.
\end{align}
We have that
\begin{align}
\sin(2\theta)\frac{n^2}{\sin^2(\theta)}k(\theta) &=\frac{2}{\tan(\lambda)}n^2 \tan^2(\theta)\\
&= \frac{2n^2}{\tan(\lambda)}
\end{align}
and hence
\begin{align}
\left(\sin(2\theta) k'(\theta) \right)'- \sin(2\theta) \frac{n^2}{\sin^2(\theta)} k(\theta) &=\frac{2}{\tan(\lambda)} \left[\frac{1}{\cos^2(\theta)}- n^2 \right]\\
& \leq \frac{2}{\tan(\lambda)} \left[\frac{1}{\cos^2(\lambda)}-n^2 \right]
\end{align}
in $(0,\lambda)$. Thus, it follows that, if $n\geq 2$ and $0 < \lambda \le \arccos\left(\frac1n\right)$, then
\begin{align}
\left(\sin(2\theta) k'(\theta) \right)'- \sin(2\theta) \frac{n^2}{\sin^2(\theta)} k(\theta) \le 0.
\end{align}
Hence  for such $\lambda$ the function $k$ is a supersolution to the differential equation of \eqref{Eq:L_ball_2} with boundary data $k(0)=0$ and $k(\lambda)=1$. Therefore, by comparison, $L'(\lambda)> k'(\lambda)=\frac{1}{\tan(\lambda)} \frac{1}{\cos^2(\lambda)}$, implying that
\begin{align}
\sigma_n(\lambda) &> \frac{1}{2} \tan(\lambda) \frac{1}{\tan(\lambda) \cos^2(\lambda)}-\frac{1}{2 \cos^2(\lambda)}=0.
\end{align}
Namely, we notice that
\begin{align}\label{a key inequality of the eigenvalue-2}
\mbox{ if } n \ge 2 \mbox{ and } 0 < \lambda \le \arccos\left(\frac1n\right), \mbox{ then } \sigma_n(\lambda) > 0.
\end{align}


\begin{proof}[Proof of Proposition \ref{Prop_Asymptotics_Sigma_1_Second_Case}]\ 
\begin{enumerate}
\item Set $k_n(\lambda):=l'(1)=\lambda \frac{(L^*)'(\lambda)}{L^*(\lambda)}$, which satisfies
\begin{align}
\begin{cases}
k_n'(\lambda)=\left[\tan(\lambda)-\cot(\lambda)+\frac{1}{\lambda}\right]k_n(\lambda) - \frac{1}{\lambda} k_n^2(\lambda) + \frac{n^2\lambda}{\sin^2(\lambda)}\\
k_n(0)=n
\end{cases}
\end{align}
and define $K_n(\lambda):=k_n(\lambda)-\frac{3n}{\cos(\lambda)}$, so that $K_n(0)<0$. In particular, $K_n<0$ in $\left(0,\frac{\pi}{2}\right)$. Indeed, if by contradiction $K_n$ changes sign, then there exists $\lambda^*>0$ minimal so that $K_n(\lambda^*)=0$ and $K_n'(\lambda^*)\geq 0$: hence, using that $k_n(\lambda^*)=\frac{3n}{\cos(\lambda^*)}$,
\begin{align}
0 & \leq K_n'(\lambda^*)\\
& = \left[-\cot(\lambda^*)+\frac{1}{\lambda^*}\right]\frac{3n}{\cos(\lambda^*)} - \frac{1}{\lambda^*} \left(\frac{3n}{\cos(\lambda^*)}\right)^2 + \frac{n^2\lambda^*}{\sin^2(\lambda^*)}\\
& < \left[-\cot(\lambda^*)+\frac{1}{\lambda^*}\right]\frac{9n^2}{\cos(\lambda^*)} - \frac{1}{\lambda^*} \frac{9n^2}{\cos^2(\lambda^*)} + \frac{n^2\lambda^*}{\sin^2(\lambda^*)}\\
&=\frac{\lambda^* \cos^2(\lambda^*) \left[\lambda^* - 9 \sin(\lambda^*)\right] + 9 \sin^2(\lambda^*) \left[\cos(\lambda^*)-1\right]}{\lambda^* \cos^2(\lambda^*)\sin^2(\lambda^*)}n^2\\
&<0
\end{align}
in $\left(0,\frac{\pi}{2}\right)$, obtaining a contradiction. It follows that $k_n(\lambda)<\frac{3n}{\cos(\lambda)}$ which implies that for every $\lambda \in \left(0,\frac{\pi}{2}\right)$
\begin{align}
\frac{\sigma_n(\lambda)}{n}< \frac{3\tan(\lambda)}{2\lambda \cos(\lambda)}-\frac{1}{2n\cos^2(\lambda)}\le \frac{3\tan(\lambda)}{2\lambda\cos(\lambda)}<+\infty.
\end{align}

For the other estimate, consider $g_n(\lambda):=n\lambda \cot(\lambda)$, which satisfies
\begin{align}
\begin{cases}
g_n'(\lambda)< \left[\tan(\lambda)-\cot(\lambda)+\frac 1\lambda\right]g_n(\lambda) - \frac{1}{\lambda} g_n^2 + \frac{n^2 \lambda}{\sin^2(\lambda)}\\
g_n(0)=n
\end{cases}
\end{align}
and define $h_n(\lambda):=k_n(\lambda)-g_n(\lambda)$. One has
\begin{align}
\begin{cases}
h_n'(\lambda)>\left[ \tan(\lambda)-\cot(\lambda)+\frac{1}{\lambda}-\frac{1}{\lambda}(k_n(\lambda)+g_n(\lambda)) \right] h_n(\lambda)\\
h_n(0)=0
\end{cases}
\end{align}
which implies $h_n\geq 0$ in $\left(0,\frac{\pi}{2}\right)$ by the Gronwall's inequality. Whence $k_n\geq g_n$ and so
\begin{align}
\frac{\sigma_n(\lambda)}{n}\geq \frac{1}{2}-\frac{1}{2n\cos^2(\lambda)}\xrightarrow[]{n\to +\infty} \frac{1}{2}.
\end{align}


\item For $i <j$ and define $h(\lambda):=k_j(\lambda)-k_i(\lambda)$:
\begin{align}
\begin{cases}
h'(\lambda) > \left[\tan(\lambda)-\cot(\lambda)+\frac{1}{\lambda}-\frac{1}{\lambda}(k_i(\lambda)+k_j(\lambda))\right] h(\lambda)\\
h(0)=j-i>0
\end{cases}
\end{align}
which implies, by Gronwall's inequality, $h>0$ in $\left(0,\frac{\pi}{2}\right)$ and hence $k_j>k_i$. So $\sigma_j>\sigma_i$ in $\left(0,\frac{\pi}{2}\right)$.

\item Both (a) and (b)  follow from what seen in this section.

\item The existence of $\lambda_n$ follows by point (3). For the uniqueness, if $\lambda_n\in \left(0,\frac{\pi}{2}\right)$ is such that $\sigma_n(\lambda_n)=0$, then
\begin{align}
L'(\lambda_n)=\frac{1}{\sin(\lambda_n)\cos(\lambda_n)}.
\end{align}
Using that $L'(\lambda)=\frac{(L^*)'(\lambda)}{L^*(\lambda)}$ and the equation satisfied by $L^*$, one gets
\begin{align}\label{sigma'_lambda_n_2}
\sigma'_n(\lambda_n)&=\frac{n^2}{2\sin(\lambda_n)\cos(\lambda_n)}-\frac{1}{2\sin(\lambda_n)\cos^3(\lambda_n)}\\
&=\frac{1}{2 \sin(\lambda_n)\cos(\lambda_n)}\left(n^2-\frac{1}{\cos^2(\lambda_n)}\right).
\end{align}
Then, since $\sigma_n(\lambda_n)=0$, the fact that  $\sigma_n'(\lambda_n)<0$ follows from \eqref{a key inequality of the eigenvalue-2}.
This proves both (a) and the uniqueness of $\lambda_n$. Moreover, \eqref{a key inequality of the eigenvalue-2} yields that $\lambda_n \in \left(\arccos\left(\frac 1n\right),\frac \pi 2\right)$, which proves (c).
\end{enumerate}
\end{proof}

\bibliography{references}
\bibliographystyle{abbrv}
\end{document}